\newtheorem{theorem}{Theorem}[section]
\newtheorem{proposition}[theorem]{Proposition}
\newtheorem{lemma}{Lemma}[section]
\newtheorem{remark}{Remark}[section]
\newtheorem{example}{Example}[section]
\newtheorem{condition}{Condition}[section]
\begin{document}
\title{Noncentral moderate deviations for time-changed Lévy processes with inverse of stable 
subordinators}
\author{Antonella Iuliano\thanks{Address: Dipartimento di Matematica, Informatica ed Economia, 
Università degli Studi della Basilicata, Via dell'Ateneo Lucano 10 (Campus di Macchia Romana), 
I-85100 Potenza, Italy. e-mail: \texttt{antonella.iuliano@unibas.it}}
\and Claudio Macci\thanks{Address: Dipartimento di Matematica, Università di Roma Tor Vergata,
Via della Ricerca Scientifica, I-00133 Rome, Italy. e-mail: \texttt{macci@mat.uniroma2.it}}
\and Alessandra Meoli\thanks{Dipartimento di Matematica, Università degli Studi di Salerno, 
Via Giovanni Paolo II n. 132, 84084 Fisciano, SA, Italy. e-mail: \texttt{ameoli@unisa.it}}}
\maketitle
\begin{abstract}
In this paper we present some extensions of recent noncentral moderate deviation results in the literature.
In the first part we generalize the results in \cite{BeghinMacciSPL2022} by considering a general Lévy process
$\{S(t):t\geq 0\}$ instead of a compound Poisson process. In the second part we assume that $\{S(t):t\geq 0\}$
has bounded variation and is not a subordinator; thus $\{S(t):t\geq 0\}$ can be seen as the difference of two 
independent non-null subordinators. In this way we generalize the results in \cite{LeeMacci} for Skellam processes.\\
\ \\
\noindent\emph{Keywords}: large deviations, weak convergence, Mittag-Leffler function, tempered stable su\-bordinators.\\
\noindent\emph{2000 Mathematical Subject Classification}: 60F10, 60F05, 60G22, 33E12.
\end{abstract}

\section{Introduction}
The theory of large deviations gives an asymptotic computation of small probabilities on exponential scale (see 
\cite{DemboZeitouni} as a reference of this topic). In particular a \emph{large deviation principle} (LDP from now on) 
provides asymptotic bounds for families of probability measures on the same topological space; these bounds are expressed 
in terms of a \emph{speed function} (that tends to infinity) and a nonnegative lower semicontinuous \emph{rate function} 
defined on the topological space.

The term \emph{moderate deviations} is used for a class of LDPs which fills the gap between the two following asymptotic 
regimes: a convergence to a constant $x_0$ (at least in probability), and a weak convergence to a non-constant centered 
Gaussian random variable. The convergence to $x_0$ is governed by a \emph{reference LDP} with speed $v_t$, and a rate 
function that uniquely vanishes at $x_0$. Then we have a class of LDPs which depends on the choice of some positive 
scalings $\{a_t:t>0\}$ such that $a_t\to 0$ and $a_tv_t\to\infty$ (as $t\to\infty$).

In this paper the topological space mentioned above is the real line $\mathbb{R}$ equipped with the Borel 
$\sigma$-algebra, $x_0=0\in\mathbb{R}$ and $v_t=t$; thus we shall have
\begin{equation}\label{eq:MDconditions}
	a_t\to 0\ \mbox{and}\ a_tt\to\infty\quad (\mbox{as}\ t\to\infty).
\end{equation}

In some recent papers (see e.g. \cite{GiulianoMacci} and some references cited therein), the term
\emph{noncentral moderate deviations} has been introduced when one has the situation described above, but the weak 
convergence is towards a non-constant and non-Gaussian distributed random variable. A multivariate
example is given in \cite{LeonenkoMacciPacchiarotti}.

A possible way to construct examples of moderate deviation results
is the following: $\{C_t:t>0\}$ is a family of random variables which converges to zero as $t\to\infty$ (and satisfies 
the reference LDP with a certain rate function and a certain speed $v_t$); moreover, for a certain function $\phi$ such 
that $\phi(x)\to\infty$ as $x\to\infty$, $\{\phi(v_t)C_t:t>0\}$ converges weakly to some non-degenerating random variable; 
then, for every sequence family of positive scalings $\{a_t:t>0\}$ such that $a_t\to 0$ and $a_tv_t\to\infty$ 
(as $t\to\infty$), one should be able to prove the LDP for $\{\phi(a_tv_t)C_t:t>0\}$ with a certain rate function and 
speed $1/a_t$. We remark that, according to this approach, one typically has $\phi(x)=\sqrt{x}$ for central moderate deviation
results (i.e. for the cases in which the weak convergence is towards a Normal distribution); moreover, to give an example with
a different situation, we have $\phi(x)=x$ for the noncentral moderate deviation result in \cite{IafrateMacci} (note that 
$r$ and $\gamma_r$ in that reference plays the role of $t$ and $a_t$ in this paper). The results in this paper follow this
approach with $\phi(x)=x^\beta$ for some $\beta\in(0,1)$; more precisely $\beta=\alpha(\nu)$ (see \eqref{eq:exponents}) in 
Section \ref{sec:gen-results-Beghin-Macci-2022} and $\beta=\alpha_1(\nu)$ (see \eqref{eq:exponent}) in Section 
\ref{sec:gen-results-Lee-Macci}.

Our aim is to present some extensions of the recent results presented in \cite{BeghinMacciSPL2022} and \cite{LeeMacci}.
In particular we recall that a subordinator is a nondecreasing (real-valued) Lévy process. Throughout this paper we 
always deal with real-valued light-tailed Lévy processes $\{S(t):t\geq 0\}$ described in the next Condition 
\ref{cond:real-lighttailed-Levyprocess}, with an independent random time-change in terms of inverse of stable subordinators.

\begin{condition}\label{cond:real-lighttailed-Levyprocess}
	Let $\{S(t):t\geq 0\}$ be a real-valued Lévy process, and let $\kappa_S$ be the function defined by
	$$\kappa_S(\theta):=\log\mathbb{E}[e^{\theta S(1)}].$$
	We assume that the function $\kappa_S$ is finite in a neighborhood of the origin $\theta=0$. 
	In particular the random variable $S(1)$ has finite mean $m:=\kappa_S^\prime(0)$ and finite variance 
	$q:=\kappa_S^{\prime\prime}(0)$.
\end{condition}

We recall that, if $\{S(t):t\geq 0\}$ in Condition \ref{cond:real-lighttailed-Levyprocess} is a Poisson process and
$\{L_\nu(t):t\geq 0\}$ is an independent inverse of stable subordinators, then the process $\{S(L_\nu(t)):t\geq 0\}$ is a 
(time) fractional Poisson process  (see \cite{MeerschaertNaneVellaisamy}; see also Section 2.4 in \cite{MeerschaertSikorskii} 
for more general time fractional processes).

The aim of this paper is to provide some extensions of recent noncentral moderate deviation results in the literature. More 
precisely we mean:
\begin{enumerate}
	\item the generalization of the results in \cite{BeghinMacciSPL2022} by considering a general Lévy process
	$\{S(t):t\geq 0\}$ instead of a compound Poisson process;
	\item the generalization of the results in \cite{LeeMacci} by considering the difference between two non-null
	independent subordinators $\{S(t):t\geq 0\}$ instead of a Skellam process (which is the difference between two 
	independent Poisson processes).
\end{enumerate}
For the first item we have only one (independent) random time-change for $\{S(t):t\geq 0\}$, and we can specify the results to 
the fractional Skellam processes of type 2 in \cite{KerssLeonenkoSikorskii}. For the second item we shall assume that 
$\{S(t):t\geq 0\}$ has bounded variation and is not a subordinator; thus $\{S(t):t\geq 0\}$ can be seen as the difference of 
two independent non-null subordinators $\{S_1(t):t\geq 0\}$ and $\{S_2(t):t\geq 0\}$ (see Lemma \ref{lem:Chi} in this paper).
Then, for the second item, we have two (independent) random time-changes for $\{S_1(t):t\geq 0\}$ and $\{S_2(t):t\geq 0\}$, and 
we can specify the results to the fractional Skellam processes of type 1 in \cite{KerssLeonenkoSikorskii}.

The outline of the paper is as follows. In Section \ref{sec:preliminaries} we recall some preliminaries. The extensions presented
above in items 1 and 2 are studied in Sections \ref{sec:gen-results-Beghin-Macci-2022} and \ref{sec:gen-results-Lee-Macci}, 
respectively. In Section \ref{sec:referee} we discuss the possibility to have some generalizations with more general random 
time-changes, and in particular we present Propositions \ref{prop:LD-gen} and \ref{prop:LD-bis-gen} which provide a generalization
of the reference LDPs in Propositions \ref{prop:LD} and \ref{prop:LD-bis}, respectively.
In Section \ref{sec:comparisons} we present some comparisons between rate functions, and in particular we follow the 
same lines of the comparisons in Section 5 in \cite{LeeMacci}. Finally, motivated by potential applications to other fractional 
processes in the literature, in Section \ref{sec:diff-TSS} we discuss the case of the difference of two (independent) tempered 
stable subordinators.

\section{Preliminaries}\label{sec:preliminaries}
In this section we recall some preliminaries on large deviations and on the inverse of the stable subordinator,
together with the Mittag-Leffler function.

\subsection{Preliminaries on large deviations}
We start with some basic definitions (see e.g. \cite{DemboZeitouni}). In view of what follows we present definitions and results
for families of real random variables $\{Z(t):t>0\}$ defined on the same probability space $(\Omega,\mathcal{F},P)$, where $t$ 
goes to infinity. A real-valued function $\{v_t:t>0\}$ such that $v_t\to\infty$ (as $t\to\infty$) is called a 
\emph{speed function}, and a lower semicontinuous function $I:\mathbb{R}\to[0,\infty]$ is called a \emph{rate function}. Then 
$\{Z(t):t>0\}$ satisfies the LDP with speed $v_t$ and a rate function $I$ if
$$\limsup_{t\to\infty}\frac{1}{v_t}\log P(Z(t)\in C)\leq-\inf_{x\in C}I(x)\quad\mbox{for all closed sets}\ C,$$
and
$$\liminf_{t\to\infty}\frac{1}{v_t}\log P(Z(t)\in O)\geq-\inf_{x\in O}I(x)\quad\mbox{for all open sets}\ O.$$
The rate function $I$ is said to be \emph{good} if, for every $\beta\geq 0$, the level set 
$\{x\in\mathbb{R}:I(x)\leq\beta\}$ is compact. We also recall the following known result (see e.g. Theorem
2.3.6(c) in \cite{DemboZeitouni}).

\begin{theorem}[G\"artner Ellis Theorem]\label{th:GE}
Assume that, for all $\theta\in\mathbb{R}$, there exists
$$\Lambda(\theta):=\lim_{t\to\infty}\frac{1}{v_t}\log\mathbb{E}\left[e^{v_t\theta Z(t)}\right]$$
as an extended real number; moreover assume that the origin $\theta=0$ 
belongs to the interior of the set $\mathcal{D}(\Lambda):=\{\theta\in\mathbb{R}:\Lambda(\theta)<\infty\}$.
Furthermore let $\Lambda^*$ be the Legendre-Fenchel transform of $\Lambda$, i.e. the function defined by
$$\Lambda^*(x):=\sup_{\theta\in\mathbb{R}}\{\theta x-\Lambda(\theta)\}.$$
Then, if $\Lambda$ is essentially smooth and lower semi-continuous, then $\{Z(t):t>0\}$
satisfies the LDP with good rate function $\Lambda^*$.
\end{theorem}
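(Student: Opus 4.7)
The plan is to prove the standard Gärtner–Ellis result along classical lines, splitting into the upper and lower bounds. First I would observe that $\Lambda$ is convex, being a pointwise limit of the convex functions $\Lambda_t(\theta):=\frac{1}{v_t}\log\mathbb{E}[e^{v_t\theta Z(t)}]$; combined with the assumed lower semicontinuity this yields the biduality $\Lambda^{**}=\Lambda$, and since $0\in\mathrm{int}\,\mathcal{D}(\Lambda)$ with $\Lambda(0)=0$, the Legendre–Fenchel transform $\Lambda^*$ is a good rate function. I would then establish exponential tightness of $\{Z(t)\}$: for any $L>0$, choosing $\theta_+>0>\theta_-$ inside $\mathcal{D}(\Lambda)$, Chebyshev's inequality gives
\[
\frac{1}{v_t}\log P(|Z(t)|\geq M)\leq -\min\{\theta_+ M-\Lambda_t(\theta_+),\,-\theta_- M-\Lambda_t(\theta_-)\},
\]
whose right-hand side tends to a quantity smaller than $-L$ once $M$ is large. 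This reduces the upper bound to the case of compact sets.

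For a compact set $K$, I cover $K$ by finitely many balls $B(x_i,\delta)$, and on each ball apply Chebyshev with a parameter $\theta_i$ nearly achieving the supremum defining $\Lambda^*(x_i)$:
\[
P(Z(t)\in B(x_i,\delta))\leq e^{-v_t[\theta_i x_i-\Lambda_t(\theta_i)]+v_t|\theta_i|\delta}.
\]
Taking $t\to\infty$, then $\delta\to 0$, and using the finite union bound yields $\limsup\frac{1}{v_t}\log P(Z(t)\in K)\leq-\inf_{x\in K}\Lambda^*(x)$; combined with exponential tightness this extends to every closed set.

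The hard part is the lower bound, where essential smoothness plays the crucial role. The standard reduction shows that it suffices to prove, for every open $O$ and every $x\in O$, that $\liminf\frac{1}{v_t}\log P(Z(t)\in O)\geq -\Lambda^*(x)$; by density of the interior and lower semicontinuity one may assume $x\in\mathrm{int}\,\mathcal{D}(\Lambda^*)$. The core fact is that essential smoothness of $\Lambda$ (differentiability on $\mathrm{int}\,\mathcal{D}(\Lambda)$ together with steepness at the boundary) guarantees the existence of an \emph{exposing} $\eta_x\in\mathrm{int}\,\mathcal{D}(\Lambda)$ satisfying $\Lambda'(\eta_x)=x$ and $\Lambda^*(x)=\eta_x x-\Lambda(\eta_x)$. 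This is precisely where the hypothesis cannot be dispensed with, and it is the main obstacle of the argument.

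With $\eta_x$ in hand I perform the classical exponential change of measure, introducing the tilted law $\tilde P_t$ via $\frac{d\tilde P_t}{dP}=\exp(v_t\eta_x Z(t)-v_t\Lambda_t(\eta_x))$. Restricting attention to $O\cap B(x,\delta)$ yields the lower bound
\[
P(Z(t)\in O)\geq e^{-v_t\eta_x x+v_t\Lambda_t(\eta_x)-v_t|\eta_x|\delta}\,\tilde P_t(Z(t)\in B(x,\delta)).
\]
Under $\tilde P_t$ the normalized log-moment generating function of $Z(t)$ converges to $\tilde\Lambda(\theta):=\Lambda(\theta+\eta_x)-\Lambda(\eta_x)$, whose Legendre transform is a good rate function vanishing uniquely at $x=\Lambda'(\eta_x)$ (uniqueness comes from differentiability of $\tilde\Lambda$ at the origin). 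Applying the already-proved upper bound to $\{Z(t)\}$ under $\tilde P_t$ on the closed set $B(x,\delta)^c$ gives $\tilde P_t(Z(t)\in B(x,\delta))\to 1$. Letting first $t\to\infty$ and then $\delta\to 0$ produces $\liminf\frac{1}{v_t}\log P(Z(t)\in O)\geq\Lambda(\eta_x)-\eta_x x=-\Lambda^*(x)$, completing the proof.
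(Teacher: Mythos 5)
This is a result the paper does not prove at all: Theorem \ref{th:GE} is recalled verbatim as a known theorem and attributed to Theorem 2.3.6(c) of Dembo--Zeitouni, so there is no in-paper argument to compare against. Your proposal is essentially the standard Dembo--Zeitouni proof, and its outline is correct: convexity of $\Lambda$ as a pointwise limit of the convex $\Lambda_t$, exponential tightness from $0\in\mathrm{int}\,\mathcal{D}(\Lambda)$, the Chebyshev/covering upper bound for compacts, and the tilting argument for the lower bound, with the tilted rate function $\widetilde\Lambda^*$ vanishing only at $x$ because $\partial\widetilde\Lambda(0)=\{\Lambda'(\eta_x)\}$. Three points are glossed and worth flagging. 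First, the existence of $\eta_x\in\mathrm{int}\,\mathcal{D}(\Lambda)$ with $\Lambda'(\eta_x)=x$ for the relevant $x$ is exactly the content of Rockafellar's lemma (Lemma 2.3.12 in Dembo--Zeitouni); you correctly identify it as the crux where essential smoothness enters, but you assert it rather than prove it (in the one-dimensional setting of this paper it can be argued directly from monotonicity and steepness of $\Lambda'$ on $\mathrm{int}\,\mathcal{D}(\Lambda)$). Second, the reduction ``one may assume $x\in\mathrm{int}\,\mathcal{D}(\Lambda^*)$'' does not follow from lower semicontinuity of $\Lambda^*$ (that inequality goes the wrong way); it follows from convexity, by sliding $x$ along a segment toward a point of $\mathrm{ri}\,\mathcal{D}(\Lambda^*)$ so that $\Lambda^*$ along the segment is bounded by the chord, and one should use the relative interior to cover the degenerate case where $\mathcal{D}(\Lambda^*)$ is a single point. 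Third, the exponential change of measure requires $\Lambda_t(\eta_x)<\infty$, which is only guaranteed for $t$ large (since $\Lambda_t(\eta_x)\to\Lambda(\eta_x)<\infty$); this suffices for the $\liminf$ but should be said. With these standard repairs your argument is the classical proof of the quoted theorem.
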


We also recall (see e.g. Definition 2.3.5 in \cite{DemboZeitouni}) that $\Lambda$ is essentially smooth
if the interior of $\mathcal{D}(\Lambda)$ is non-empty, the function $\Lambda$ is differentiable 
throughout the interior of $\mathcal{D}(\Lambda)$, and $\Lambda$ is steep, i.e. $|\Lambda^\prime(\theta_n)|\to\infty$
whenever $\theta_n$ is a sequence of points in the interior of $\mathcal{D}(\Lambda)$ which converge to 
a boundary point of $\mathcal{D}(\Lambda)$.

\subsection{Preliminaries on the inverse of a stable subordinator}
We start with the definition of the Mittag-Leffler function (see e.g. \cite{GorenfloKilbasMainardiRogosin}, eq. 
(3.1.1))
$$E_\nu(x):=\sum_{k=0}^\infty\frac{x^k}{\Gamma(\nu k+1)}.$$
It is known (see Proposition 3.6 in \cite{GorenfloKilbasMainardiRogosin} for the case $\alpha\in(0,2)$; indeed 
$\alpha$ in that reference coincides with $\nu$ in this paper) that we have
\begin{equation}\label{eq:ML-asymptotics}
\left\{\begin{array}{l}
E_\nu(x)\sim\frac{e^{x^{1/\nu}}}{\nu}\ \mbox{as}\ x\to\infty\\
\mbox{for $y<0$, we have} \frac{1}{x}\log E_\nu(yx)\to 0\ \mbox{as}\ x\to\infty.
\end{array}\right.
\end{equation}

Then, if we consider the inverse of the stable subordinator $\{L_\nu(t):t\geq 0\}$ for $\nu\in(0,1)$, we have 
\begin{equation}\label{eq:MGF-inverse-stable-sub}
\mathbb{E}[e^{\theta L_\nu(t)}]=E_\nu(\theta t^\nu)\ \mbox{for all}\ \theta\in\mathbb{R}.
\end{equation}
This formula appears in several references with $\theta\leq 0$ only; however this restriction is not
needed because we can refer to the analytic continuation of the Laplace transform with complex argument.

\section{Results with only one random time-change}\label{sec:gen-results-Beghin-Macci-2022}
Throughout this section we assume that the following condition holds.

\begin{condition}\label{cond:gen-BM}
	Let $\{S(t):t\geq 0\}$ be a real-valued Lévy process as in Condition \ref{cond:real-lighttailed-Levyprocess},
	and let $\{L_\nu(t):t\geq 0\}$ be an inverse of a stable subordinator for $\nu\in(0,1)$. Moreover assume that
	$\{S(t):t\geq 0\}$ and $\{L_\nu(t):t\geq 0\}$ are independent.
\end{condition}

The next Propositions \ref{prop:LD}, \ref{prop:weak-convergence} and \ref{prop:ncMD} provide a generalization of 
Propositions 3.1, 3.2 and 3.3 in \cite{BeghinMacciSPL2022}, respectively, in which $\{S(t):t\geq 0\}$ is a compound
Poisson process. We start with the reference LDP for the convergence in probability to zero of 
$\left\{\frac{S(L_\nu(t))}{t}:t>0\right\}$.

\begin{proposition}\label{prop:LD}
Assume that Condition \ref{cond:gen-BM} holds. Moreover let $\Lambda_{\nu,S}$
be the function defined by
\begin{equation}\label{eq:LD-GE-limit}
\Lambda_{\nu,S}(\theta):=\left\{\begin{array}{ll}
(\kappa_S(\theta))^{1/\nu}&\ \mbox{if}\ \kappa_S(\theta)\geq 0\\
0&\ \mbox{if}\ \kappa_S(\theta)<0,
\end{array}\right.
\end{equation}
and assume that it is an essentially smooth function. Then $\left\{\frac{S(L_\nu(t))}{t}:t>0\right\}$ 
satisfies the LDP with speed $v_t=t$ and good rate function $I_{\mathrm{LD}}$ defined by
\begin{equation}\label{eq:LD-rf}
I_{\mathrm{LD}}(x):=\sup_{\theta\in\mathbb{R}}\{\theta x-\Lambda_{\nu,S}(\theta)\}.
\end{equation}
\end{proposition}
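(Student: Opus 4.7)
The plan is to verify the hypotheses of the G\"artner--Ellis theorem (Theorem \ref{th:GE}) for the family $\{Z(t)\}=\{S(L_\nu(t))/t\}$ with speed $v_t=t$, and then identify the resulting Legendre--Fenchel transform with $I_{\mathrm{LD}}$.

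First I would compute the limiting logarithmic moment generating function. By independence of $S$ and $L_\nu$ in Condition \ref{cond:gen-BM}, conditioning on $L_\nu(t)$ gives
$$\mathbb{E}\bigl[e^{t\theta \cdot S(L_\nu(t))/t}\bigr]=\mathbb{E}\bigl[e^{\theta S(L_\nu(t))}\bigr]=\mathbb{E}\bigl[e^{\kappa_S(\theta) L_\nu(t)}\bigr]=E_\nu(\kappa_S(\theta)\,t^\nu),$$
where the last equality is \eqref{eq:MGF-inverse-stable-sub} (valid for all real arguments by analytic continuation, as noted after that formula). So the G\"artner--Ellis limit reduces to computing $\lim_{t\to\infty}\frac{1}{t}\log E_\nu(\kappa_S(\theta)\,t^\nu)$.

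Second, I would split on the sign of $\kappa_S(\theta)$ and invoke the two asymptotics in \eqref{eq:ML-asymptotics}. If $\kappa_S(\theta)>0$, the first asymptotic yields
$$\frac{1}{t}\log E_\nu(\kappa_S(\theta)t^\nu)\sim\frac{1}{t}\Bigl((\kappa_S(\theta)t^\nu)^{1/\nu}-\log\nu\Bigr)\longrightarrow (\kappa_S(\theta))^{1/\nu}.$$
If $\kappa_S(\theta)<0$, setting $x=t^\nu$ in the second asymptotic gives $\frac{1}{t^\nu}\log E_\nu(\kappa_S(\theta)t^\nu)\to 0$, and multiplying by $t^{\nu-1}\to 0$ (since $\nu\in(0,1)$) shows that $\frac{1}{t}\log E_\nu(\kappa_S(\theta)t^\nu)\to 0$. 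The boundary case $\kappa_S(\theta)=0$ gives $E_\nu(0)=1$, consistent with the value $0$. Hence the G\"artner--Ellis limit exists for every $\theta\in\mathbb{R}$ and equals $\Lambda_{\nu,S}(\theta)$ in \eqref{eq:LD-GE-limit}.

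Third, I would verify the remaining hypotheses of Theorem \ref{th:GE}: the origin lies in the interior of $\mathcal{D}(\Lambda_{\nu,S})$ because $\kappa_S$ is finite in a neighborhood of $0$ by Condition \ref{cond:real-lighttailed-Levyprocess} (so $\Lambda_{\nu,S}$ is finite there as well); essential smoothness is assumed in the statement; and lower semicontinuity follows from the continuity of $\kappa_S$ on the interior of its domain combined with the continuity of $y\mapsto y^{1/\nu}$ on $[0,\infty)$ and the fact that the two pieces of the definition agree (at value $0$) on the set $\{\kappa_S=0\}$. Theorem \ref{th:GE} then delivers the LDP with speed $t$ and good rate function $\Lambda_{\nu,S}^{*}=I_{\mathrm{LD}}$, exactly as in \eqref{eq:LD-rf}.

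The only subtle point is the case $\kappa_S(\theta)<0$: one must be careful that the second line of \eqref{eq:ML-asymptotics} is stated with respect to the argument $t^\nu$ rather than $t$, and then absorb the factor $t^{\nu-1}$ using $\nu<1$. Everything else is a direct application of G\"artner--Ellis once the Mittag-Leffler representation \eqref{eq:MGF-inverse-stable-sub} is used to eliminate the time-change.
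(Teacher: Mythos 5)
Your proof is correct and follows essentially the same route as the paper: conditioning on the time-change to get the Mittag-Leffler representation \eqref{eq:MGF-S}, computing the G\"artner--Ellis limit via the asymptotics \eqref{eq:ML-asymptotics}, and invoking the assumed essential smoothness. You simply spell out the case distinction on the sign of $\kappa_S(\theta)$ and the $t^\nu$-versus-$t$ rescaling, which the paper leaves implicit.
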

\begin{proof}
The desired LDP can be derived by applying the G\"artner Ellis Theorem (i.e. Theorem \ref{th:GE}). In fact we have
\begin{equation}\label{eq:MGF-S}
	\mathbb{E}[e^{\theta S(L_\nu(t))}]=E_\nu(\kappa_S(\theta)t^\nu)\quad \mbox{for all $\theta\in\mathbb{R}$ and for all $t\geq 0$},
\end{equation}
whence we obtain
$$\lim_{t\to\infty}\frac{1}{t}\log\mathbb{E}[e^{\theta S(L_\nu(t))}]=\Lambda_{\nu,S}(\theta)
\ \mbox{for all}\ \theta\in\mathbb{R}$$
by \eqref{eq:ML-asymptotics}.
\end{proof}

\begin{remark}\label{rem:es-not-guaranteed}
	The function $\Lambda_{\nu,S}$ in Proposition \ref{prop:LD}, eq. \eqref{eq:LD-GE-limit}, could not be essentially smooth.
	Here we present a counterexample. Let $\{S(t):t\geq 0\}$ be defined by $S(t):=S_1(t)-S_2(t)$, where $\{S_1(t):t\geq 0\}$
	is a tempered stable subordinator with parameters $\beta\in(0,1)$ and $r>0$, and let $\{S_2(t):t\geq 0\}$ be the 
	deterministic subordinator defined by $S_2(t)=ht$ for some $h>0$. Then
    $$\kappa_{S_1}(\theta):=\left\{\begin{array}{ll}
    	r^\beta-(r-\theta)^\beta&\ \mbox{if}\ \theta\leq r\\
    	\infty&\ \mbox{if}\ \theta>r
    \end{array}\right.$$
    and $\kappa_{S_2}(\theta):=h\theta$; thus
    $$\kappa_S(\theta):=\kappa_{S_1}(\theta)+\kappa_{S_2}(-\theta)=\left\{\begin{array}{ll}
    	r^\beta-(r-\theta)^\beta-h\theta&\ \mbox{if}\ \theta\leq r\\
    	\infty&\ \mbox{if}\ \theta>r.
    \end{array}\right.$$
    It is easy to check that, for this example, the function $\Lambda_{\nu,S}$ is essentially smooth if and only if
    $$\lim_{\theta\uparrow r}\Lambda_{\nu,S}^\prime(\theta)=+\infty;$$
    moreover this condition occurs if and only if $\kappa_S(r)>0$, i.e. if and only if $h<r^{\beta-1}$. 
    To better explain this see Figure \ref{fig:essential-smoothness}.
\end{remark}

\begin{figure}[!ht]
	\centering
	\includegraphics[scale=0.47]{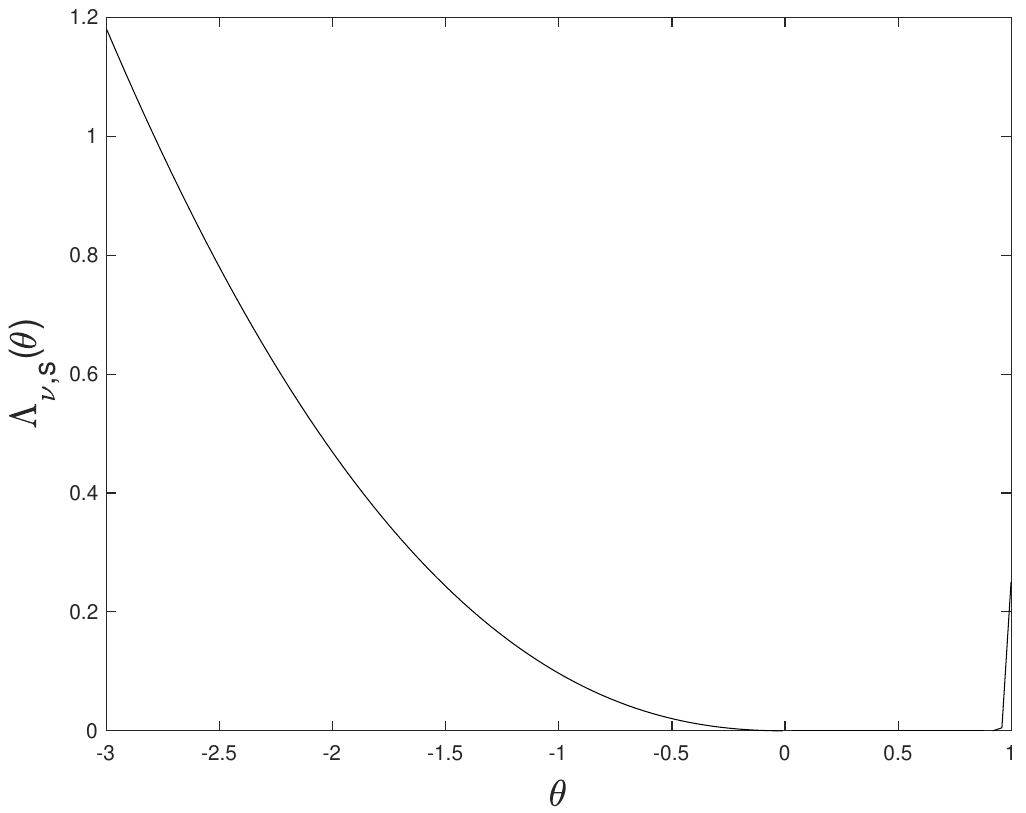}
	\includegraphics[scale=0.47]{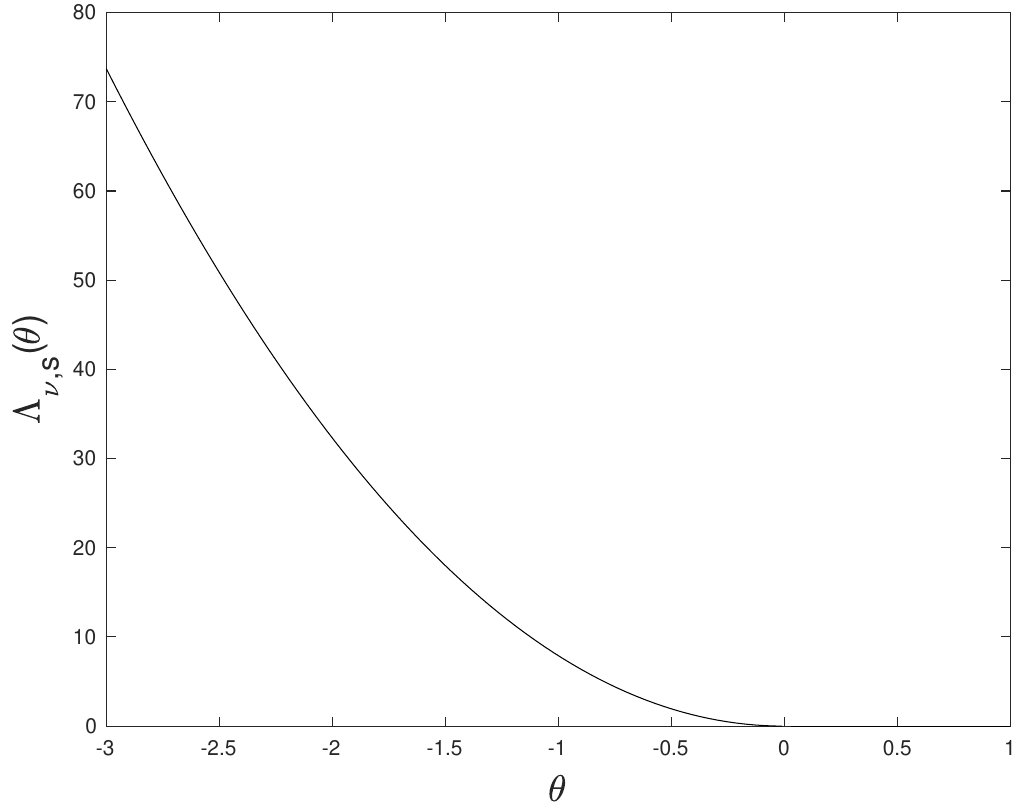}
	\caption{The function $\Lambda_{\nu,S}$ in Remark \ref{rem:es-not-guaranteed} for $\theta\leq r=1$. Numerical values: 
		$\nu=0.5$, $\beta=0.25$; $h=0.5$ on the left, and $h=3$ on the right.}
	\label{fig:essential-smoothness}
\end{figure}

Now we present weak convergence results. In view of these results it is useful to consider the following notation:
\begin{equation}\label{eq:exponents}
	\alpha(\nu):=\left\{\begin{array}{ll}
		1-\nu/2&\ \mbox{if}\ m=0\\
		1-\nu&\ \mbox{if}\ m\neq 0.
	\end{array}\right.
\end{equation}

\begin{proposition}\label{prop:weak-convergence}
Assume that Condition \ref{cond:gen-BM} holds and let $\alpha(\nu)$ be defined in \eqref{eq:exponents}.
We have the following statements.
\begin{itemize}
\item If $m=0$, then $\{t^{\alpha(\nu)}\frac{S(L_\nu(t))}{t}:t>0\}$ converges weakly to
$\sqrt{qL_\nu(1)}Z$, where $Z$ is a standard Normal distributed random variable, and
independent to $L_\nu(1)$.
\item If $m\neq 0$, then $\{t^{\alpha(\nu)}\frac{S(L_\nu(t))}{t}:t>0\}$ converges weakly to
$mL_\nu(1)$.
\end{itemize}
\end{proposition}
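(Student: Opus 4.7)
The natural approach is to prove convergence of characteristic functions and apply the Lévy continuity theorem. The key input is the identity \eqref{eq:MGF-S}, which via analytic continuation (as pointed out by the authors just after \eqref{eq:MGF-inverse-stable-sub}) extends to purely imaginary arguments. Thus, writing $X_t:=t^{\alpha(\nu)}\frac{S(L_\nu(t))}{t}=t^{\alpha(\nu)-1}S(L_\nu(t))$, I would start from
$$\mathbb{E}\bigl[e^{i\lambda X_t}\bigr]=E_\nu\bigl(\kappa_S(i\lambda t^{\alpha(\nu)-1})\,t^\nu\bigr),\quad\lambda\in\mathbb{R}.$$

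Next I would Taylor-expand $\kappa_S$ at the origin, using that $\kappa_S(0)=0$, $\kappa_S'(0)=m$, $\kappa_S''(0)=q$ (Condition \ref{cond:real-lighttailed-Levyprocess}):
$$\kappa_S(\theta)=m\theta+\tfrac{q}{2}\theta^2+o(\theta^2)\quad\text{as}\ \theta\to 0.$$
In the case $m\neq 0$ we have $\alpha(\nu)-1=-\nu$, so substituting $\theta=i\lambda t^{-\nu}$ yields $\kappa_S(i\lambda t^{-\nu})\,t^\nu=i\lambda m+O(t^{-\nu})\to i\lambda m$, whence by continuity of $E_\nu$
$$\mathbb{E}\bigl[e^{i\lambda X_t}\bigr]\longrightarrow E_\nu(i\lambda m)=\mathbb{E}\bigl[e^{i\lambda\, m L_\nu(1)}\bigr],$$
the last equality by \eqref{eq:MGF-inverse-stable-sub} with $t=1$.

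In the case $m=0$ we have $\alpha(\nu)-1=-\nu/2$, so with $\theta=i\lambda t^{-\nu/2}$ the linear term vanishes and we get $\kappa_S(i\lambda t^{-\nu/2})\,t^\nu=-\tfrac{q}{2}\lambda^2+o(1)$. Hence
$$\mathbb{E}\bigl[e^{i\lambda X_t}\bigr]\longrightarrow E_\nu\!\left(-\tfrac{q}{2}\lambda^2\right).$$
To identify this with the characteristic function of $\sqrt{qL_\nu(1)}\,Z$ with $Z$ standard Normal independent of $L_\nu(1)$, I would condition on $L_\nu(1)$:
$$\mathbb{E}\bigl[e^{i\lambda\sqrt{qL_\nu(1)}Z}\bigr]=\mathbb{E}\bigl[e^{-\frac{q L_\nu(1)\lambda^2}{2}}\bigr]=E_\nu\!\left(-\tfrac{q}{2}\lambda^2\right),$$
again by \eqref{eq:MGF-inverse-stable-sub}. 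Lévy's continuity theorem then gives weak convergence in both cases.

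The only subtle point is the legitimacy of plugging the imaginary argument $\theta=i\lambda t^{\alpha(\nu)-1}$ into \eqref{eq:MGF-S} and of carrying out the Taylor expansion there; since Condition \ref{cond:real-lighttailed-Levyprocess} makes $\kappa_S$ analytic in a complex neighborhood of $0$, and the imaginary arguments above tend to $0$ as $t\to\infty$, one is eventually working inside the domain of analyticity and the expansions are valid. This is really the only place where care is required; the rest is identification of limits.
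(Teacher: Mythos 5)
Your argument is correct and its computational core is exactly the paper's: the same identity \eqref{eq:MGF-S}, the same scalings $t^{-\nu/2}$ (for $m=0$) and $t^{-\nu}$ (for $m\neq 0$), the same second-- resp.\ first--order Taylor expansion of $\kappa_S$ at the origin, and the same identification of the limits $E_\nu\bigl(\tfrac{q\theta^2}{2}\bigr)$ and $E_\nu(m\theta)$ with the transform of $\sqrt{qL_\nu(1)}\,Z$ and $mL_\nu(1)$ via \eqref{eq:MGF-inverse-stable-sub}. The only difference is the choice of transform: the paper evaluates moment generating functions at real arguments $\theta$, so that \eqref{eq:MGF-S} is used exactly as stated, and weak convergence then rests on the standard (implicit) fact that pointwise convergence of moment generating functions finite in a neighborhood of the origin implies convergence in distribution; you instead evaluate at imaginary arguments and invoke L\'evy's continuity theorem, which makes the passage from transform convergence to weak convergence immediate, at the price of having to justify \eqref{eq:MGF-S} and \eqref{eq:MGF-inverse-stable-sub} for complex arguments with nonpositive real part. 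You correctly flag this as the delicate point, and the justification you sketch (analyticity of $\kappa_S$ near $0$ under Condition \ref{cond:real-lighttailed-Levyprocess}, plus the analytic continuation already invoked by the paper after \eqref{eq:MGF-inverse-stable-sub}, noting that $\mathrm{Re}\,\kappa_S(i\lambda)\leq 0$) is sound. So the two proofs are interchangeable; yours trades one routine technical lemma for another.
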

\begin{proof}
In both cases $m=0$ and $m\neq 0$ we study suitable limits (as $t\to\infty$) in terms of the moment generating function in 
\eqref{eq:MGF-S}; so, when we take the limit, we have to take into account \eqref{eq:ML-asymptotics}.

If $m=0$, then we have
\begin{multline*}
\mathbb{E}\left[e^{\theta t^{\alpha(\nu)}\frac{S(L_\nu(t))}{t}}\right]
=\mathbb{E}\left[e^{\theta\frac{S(L_\nu(t))}{t^{\nu/2}}}\right]=E_\nu\left(\kappa_S\left(\frac{\theta}{t^{\nu/2}}\right)t^\nu\right)\\ 
=E_\nu\left(\left(\frac{q\theta^2}{2t^\nu}+o\left(\frac{1}{t^\nu}\right)\right)t^\nu\right)
\to E_\nu\left(\frac{q\theta^2}{2}\right)\ \mbox{for all}\ \theta\in\mathbb{R}.
\end{multline*}
Thus the desired weak convergence is proved noting that (here we take into account 
\eqref{eq:MGF-inverse-stable-sub})
$$\mathbb{E}\left[e^{\theta\sqrt{qL_\nu(1)}Z}\right]
=\mathbb{E}\left[e^{\frac{\theta^2q}{2}L_\nu(1)}\right]
=E_\nu\left(\frac{q\theta^2}{2}\right)\ \mbox{for all}\ \theta\in\mathbb{R}.$$

If $m\neq 0$, then we have
\begin{multline*}
\mathbb{E}\left[e^{\theta t^{\alpha(\nu)}\frac{S(L_\nu(t))}{t}}\right]
=\mathbb{E}\left[e^{\theta\frac{S(L_\nu(t))}{t^\nu}}\right]
=E_\nu\left(\kappa_S\left(\frac{\theta}{t^\nu}\right)t^\nu\right)\\
=E_\nu\left(\left(\frac{m\theta}{t^\nu}+o\left(\frac{1}{t^\nu}\right)\right)t^\nu\right)
\to E_\nu\left(m\theta\right)\ \mbox{for all}\ \theta\in\mathbb{R}.
\end{multline*}
Thus the desired weak convergence is proved by \eqref{eq:MGF-inverse-stable-sub}.
\end{proof}

Now we present the non-central moderate deviation results.

\begin{proposition}\label{prop:ncMD}
Assume that Condition \ref{cond:gen-BM} holds and let $\alpha(\nu)$ be defined in \eqref{eq:exponents}.
Moreover assume that $q>0$ if $m=0$. Then, for every family of positive numbers $\{a_t:t>0\}$ such that \eqref{eq:MDconditions} 
holds, the family of random variables $\left\{\frac{(a_tt)^{\alpha(\nu)}S(L_\nu(t))}{t}:t>0\right\}$ 
satisfies the LDP with speed $1/a_t$ and good rate function $I_{\mathrm{MD}}(\cdot;m)$ defined by:
$$\left.\begin{array}{cc}
\mbox{if}\ m=0,&\ I_{\mathrm{MD}}(x;0):=((\nu/2)^{\nu/(2-\nu)}-(\nu/2)^{2/(2-\nu)})\left(\frac{2x^2}{q}\right)^{1/(2-\nu)};\\
\mbox{if}\ m\neq 0,&\ I_{\mathrm{MD}}(x;m):=\left\{\begin{array}{ll}
(\nu^{\nu/(1-\nu)}-\nu^{1/(1-\nu)})\left(\frac{x}{m}\right)^{1/(1-\nu)}&\ \mbox{if}\ \frac{x}{m}\geq 0\\
\infty&\ \mbox{if}\ \frac{x}{m}<0.
\end{array}\right.
\end{array}\right.$$
\end{proposition}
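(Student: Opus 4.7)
The plan is to apply the Gärtner--Ellis Theorem (Theorem \ref{th:GE}) to $Z(t):=\frac{(a_tt)^{\alpha(\nu)}S(L_\nu(t))}{t}$ with speed $v_t=1/a_t$. The key computation is the limit
$$\Lambda_{\mathrm{MD}}(\theta):=\lim_{t\to\infty}a_t\log\mathbb{E}\!\left[e^{(\theta/a_t)Z(t)}\right]
=\lim_{t\to\infty}a_t\log E_\nu\!\left(\kappa_S\!\left(\theta(a_tt)^{\alpha(\nu)-1}\right)t^\nu\right),$$
where I have used the moment generating function identity \eqref{eq:MGF-S} from Proposition \ref{prop:LD}. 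Since $\alpha(\nu)<1$ and $a_tt\to\infty$ by \eqref{eq:MDconditions}, the argument $\theta(a_tt)^{\alpha(\nu)-1}$ tends to $0$ and eventually lies in the neighborhood of the origin on which $\kappa_S$ is finite; this lets me Taylor-expand $\kappa_S$.

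In the case $m=0$, I would use $\alpha(\nu)-1=-\nu/2$ and $\kappa_S(\eta)=\frac{q}{2}\eta^2+o(\eta^2)$ to get that the argument of $E_\nu$ is asymptotic to $\frac{q\theta^2}{2a_t^\nu}\to+\infty$; the first line of \eqref{eq:ML-asymptotics} then yields $\Lambda_{\mathrm{MD}}(\theta)=(q\theta^2/2)^{1/\nu}$. In the case $m\neq 0$, I would use $\alpha(\nu)-1=-\nu$ and $\kappa_S(\eta)=m\eta+o(\eta)$, so the argument of $E_\nu$ is asymptotic to $m\theta/a_t^\nu$. If $m\theta\geq 0$ this goes to $+\infty$ and the first line of \eqref{eq:ML-asymptotics} gives $(m\theta)^{1/\nu}$; if $m\theta<0$ it goes to $-\infty$, and applying the second line of \eqref{eq:ML-asymptotics} with $x=1/a_t^\nu$ and $y=m\theta$ shows $a_t\log E_\nu(m\theta/a_t^\nu)=o(a_t^{1-\nu})\to 0$. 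Hence $\Lambda_{\mathrm{MD}}(\theta)=(m\theta)_+^{1/\nu}$.

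Next I would check essential smoothness. When $m=0$, $\Lambda_{\mathrm{MD}}$ is a smooth power function on $\mathbb{R}$ with effective domain $\mathbb{R}$. When $m\neq 0$, the domain is again all of $\mathbb{R}$ (so steepness is vacuous); differentiability at $\theta=0$ holds because $1/\nu-1>0$ forces the one-sided derivatives to vanish. Finally I would compute the Legendre--Fenchel transform: for $m=0$, maximizing $\theta x-(q/2)^{1/\nu}|\theta|^{2/\nu}$ in $\theta$ gives, after simplification, the stated value, using the identity $\frac{2-\nu}{2}(\nu/2)^{\nu/(2-\nu)}=(\nu/2)^{\nu/(2-\nu)}-(\nu/2)^{2/(2-\nu)}$. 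For $m\neq 0$ the sup over the half-line with $m\theta<0$ is $+\infty$ when $x/m<0$ and $0$ when $x/m\geq 0$, while over the half-line with $m\theta\geq 0$ a direct computation gives $(1-\nu)\nu^{\nu/(1-\nu)}(x/m)^{1/(1-\nu)}$, which equals $(\nu^{\nu/(1-\nu)}-\nu^{1/(1-\nu)})(x/m)^{1/(1-\nu)}$.

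The main obstacle I anticipate is a clean justification of the passage to the limit inside the Mittag--Leffler function: one must show that the remainders from Taylor-expanding $\kappa_S$ contribute negligibly after multiplying by $t^\nu$, taking the logarithm, and scaling by $a_t$. This uses crucially that $a_tt\to\infty$ (so the expansion point is genuinely near zero) together with the two distinct asymptotic regimes of $E_\nu$ at $\pm\infty$ supplied by \eqref{eq:ML-asymptotics}. Once this is handled, the remaining verifications of essential smoothness and the two Legendre transform computations are routine.
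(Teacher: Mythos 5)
Your proposal is correct and follows essentially the same route as the paper: apply the Gärtner--Ellis Theorem with speed $1/a_t$, compute $\lim_t a_t\log E_\nu\bigl(\kappa_S\bigl(\theta(a_tt)^{\alpha(\nu)-1}\bigr)t^\nu\bigr)$ via the Taylor expansion of $\kappa_S$ at $0$ and the Mittag--Leffler asymptotics \eqref{eq:ML-asymptotics}, splitting into the cases $m=0$ (limit $(q\theta^2/2)^{1/\nu}$) and $m\neq 0$ (limit $(m\theta)_+^{1/\nu}$), then take the Legendre--Fenchel transforms, whose optimizers and values agree with those in the paper's proof.
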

\begin{proof}
For every $m\in\mathbb{R}$ we apply the G\"artner Ellis Theorem (Theorem \ref{th:GE}). So we have to show that
we can consider the function $\Lambda_{\nu,m}$ defined by
$$\Lambda_{\nu,m}(\theta):=\lim_{t\to\infty}
\frac{1}{1/a_t}\log\mathbb{E}\left[e^{\frac{\theta}{a_t}\frac{(a_tt)^{\alpha(\nu)}S(L_\nu(t))}{t}}\right]
\ \mbox{for all}\ \theta\in\mathbb{R},$$
or equivalently
$$\Lambda_{\nu,m}(\theta)
:=\lim_{t\to\infty}a_t\log E_\nu\left(\kappa_S\left(\frac{\theta}{(a_tt)^{1-\alpha(\nu)}}\right)t^\nu\right)
\ \mbox{for all}\ \theta\in\mathbb{R};$$
in particular we refer to \eqref{eq:ML-asymptotics} when we take the limit. Moreover, again for every 
$m\in\mathbb{R}$, we shall see that the function $\Lambda_{\nu,m}$ satisfies the hypotheses of the G\"artner 
Ellis Theorem (this can be checked by considering the expressions of the function $\Lambda_{\nu,m}$ below), and
therefore the LDP holds with good rate function $I_{\mathrm{MD}}(\cdot;m)$ defined by
\begin{equation}\label{eq:MD-Legendre-transform}
I_{\mathrm{MD}}(x;m):=\sup_{\theta\in\mathbb{R}}\{\theta x-\Lambda_{\nu,m}(\theta)\}.
\end{equation}
Then, as we shall explain below, for every $m\in\mathbb{R}$ the rate function expression in \eqref{eq:MD-Legendre-transform}
coincides with the rate function $I_{\mathrm{MD}}(\cdot;m)$ in the statement.

If $m=0$ we have
\begin{multline*}
a_t\log E_\nu\left(\kappa_S\left(\frac{\theta}{(a_tt)^{1-\alpha(\nu)}}\right)t^\nu\right)\\
=a_t\log E_\nu\left(\left(\frac{q\theta^2}{2(a_tt)^\nu}+o\left(\frac{1}{(a_tt)^\nu}\right)\right)t^\nu\right)
=a_t\log E_\nu\left(\frac{1}{a_t^\nu}\left(\frac{q\theta^2}{2}+(a_tt)^\nu o\left(\frac{1}{(a_tt)^\nu}\right)\right)\right),
\end{multline*}
and therefore
$$\lim_{t\to\infty}a_t\log E_\nu\left(\kappa_S\left(\frac{\theta}{(a_tt)^{1-\alpha(\nu)}}\right)t^\nu\right)
=\left(\frac{q\theta^2}{2}\right)^{1/\nu}=:\Lambda_{\nu,0}(\theta)\ \mbox{for all}\ \theta\in\mathbb{R};$$
thus the desired LDP holds with good rate function $I_{\mathrm{MD}}(\cdot;0)$ defined by \eqref{eq:MD-Legendre-transform}
which coincides with the rate function expression in the statement (indeed one can check that, for all $x\in\mathbb{R}$, 
the supremum in \eqref{eq:MD-Legendre-transform} is attained at 
$\theta=\theta_x:=\left(\frac{2}{q}\right)^{1/(2-\nu)}\left(\frac{\nu x}{2}\right)^{\nu/(2-\nu)}$).

If $m\neq 0$ we have
\begin{multline*}
a_t\log E_\nu\left(\kappa_S\left(\frac{\theta}{(a_tt)^{1-\alpha(\nu)}}\right)t^\nu\right)\\
=a_t\log E_\nu\left(\left(\frac{\theta m}{(a_tt)^\nu}+o\left(\frac{1}{(a_tt)^\nu}\right)\right)t^\nu\right)
=a_t\log E_\nu\left(\frac{1}{a_t^\nu}\left(\theta m+(a_tt)^\nu o\left(\frac{1}{(a_tt)^\nu}\right)\right)\right)
\end{multline*}
and therefore
$$\lim_{t\to\infty}a_t\log E_\nu\left(\kappa_S\left(\frac{\theta}{(a_tt)^{1-\alpha(\nu)}}\right)t^\nu\right)
=\left\{\begin{array}{ll}
(\theta m)^{1/\nu}&\ \mbox{if}\ \theta m\geq 0\\
0&\ \mbox{if}\ \theta m<0
\end{array}\right.=:\Lambda_{\nu,m}(\theta)\ \mbox{for all}\ \theta\in\mathbb{R};$$
thus the desired LDP holds with good rate function $I_{\mathrm{MD}}(\cdot;m)$ defined by \eqref{eq:MD-Legendre-transform}
which coincides with the rate function expression in the statement (indeed one can check that the supremum in 
\eqref{eq:MD-Legendre-transform} is attained at 
$\theta=\theta_x:=\frac{1}{m}\left(\frac{\nu x}{m}\right)^{\nu/(1-\nu)}$ for $\frac{x}{m}\geq 0$, and it is equal to
infinity for $\frac{x}{m}<0$ by letting $\theta\to\infty$ if $m<0$, and by letting $\theta\to-\infty$ if $m>0$).
\end{proof}

\begin{remark}\label{rem:*}
	As we said above, the results in this section provide a generalization of the results in \cite{BeghinMacciSPL2022} 
	in which $\{S(t):t\geq 0\}$ is a compound Poisson process. More precisely we mean that $S(t):=\sum_{k=1}^{N(t)}X_k$,
	where $\{X_n:n\geq 1\}$ are i.i.d. real valued light tailed random variables with finite mean $\mu$ and finite variance
	$\sigma^2$ (in \cite{BeghinMacciSPL2022} it was requested that $\sigma^2>0$ to avoid trivialities), independent of a 
	Poisson process $\{N(t):t\geq 0\}$ with intensity $\lambda>0$. Therefore 
	$\kappa_S(\theta)=\lambda(\mathbb{E}[e^{\theta X_1}]-1)$ for all $\theta\in\mathbb{R}$; moreover
	(see $m$ and $q$ in Condition \ref{cond:real-lighttailed-Levyprocess}) $m=\lambda\mu$ and $q=\lambda(\sigma^2+\mu^2)$.
	
	Moreover we can adapt the content of Remark 3.4 in \cite{BeghinMacciSPL2022} and we can say that, for every $m\in\mathbb{R}$ 
	(thus the case $m=0$ can be also considered), we have $I_{\mathrm{MD}}(x;m)=I_{\mathrm{MD}}(-x;-m)$ for every $x\in\mathbb{R}$.
	Finally, if we refer to $\lambda$ and $\mu$ at the beginning of this remark, we recover the rate functions in Proposition 3.3
	in \cite{BeghinMacciSPL2022} as follows:
	\begin{itemize}
		\item if $m=\lambda\mu=0$ (and therefore $\mu=0$ and $q=\lambda\sigma^2$), then $I_{\mathrm{MD}}(\cdot;0)$ in Proposition
		\ref{prop:ncMD} in this paper coincides with $I_{\mathrm{MD},0}$ in Proposition 3.3 in \cite{BeghinMacciSPL2022};
		\item if $m=\lambda\mu\neq 0$ (and therefore $\mu\neq 0$), then $I_{\mathrm{MD}}(\cdot;m)$ in Proposition
		\ref{prop:ncMD} in this paper coincides with $I_{\mathrm{MD},\mu}$ in Proposition 3.3 in \cite{BeghinMacciSPL2022}.
	\end{itemize}
\end{remark}

\begin{remark}\label{rem:q-different-from-zero}
	In Proposition \ref{prop:ncMD} we have assumed that $q\neq 0$ when $m=0$. Indeed, if $q=0$ and $m=0$, the process 
	$\{S(L_\nu(t)):t\geq 0\}$ in Propositions \ref{prop:LD}, \ref{prop:weak-convergence} and \ref{prop:ncMD} is identically equal
	to zero (because $S(t)=0$ for all $t\geq 0$) and the weak convergence in Proposition \ref{prop:weak-convergence} (for $m=0$) 
	is towards a constant random variable (i.e. the costant random variable equal to zero). Moreover, again if $q=0$ and $m=0$,
	the rate function $I_{\mathrm{MD}}(x;0)$ in Proposition \ref{prop:ncMD} is not well-defined (because there is a denominator 
	equal to zero).
\end{remark}

\section{Results with two independent random time-changes}\label{sec:gen-results-Lee-Macci}
Throughout this section we assume that the following condition holds.

\begin{condition}\label{cond:gen-LM}
	Let $\{S(t):t\geq 0\}$ be a real-valued Lévy process as in Condition \ref{cond:real-lighttailed-Levyprocess},
    and let $\{L_{\nu_1}^{(1)}(t):t\geq 0\}$ and $\{L_{\nu_2}^{(2)}(t):t\geq 0\}$ be two independent inverses of stable
    subordinators for $\nu_1,\nu_2\in(0,1)$, and independent of $\{S(t):t\geq 0\}$.	We assume that $\{S(t):t\geq 0\}$ has bounded
    variation, and it is not a subordinator.
\end{condition}

We have the following consequence of Condition \ref{cond:gen-LM}.

\begin{lemma}\label{lem:Chi}
	Assume that Condition \ref{cond:gen-LM} holds. Then there exists two non-null independent subordinators $\{S_1(t):t\geq 0\}$ 
	and $\{S_2(t):t\geq 0\}$ such that $\{S(t):t\geq 0\}$ is distributed as $\{S_1(t)-S_2(t):t\geq 0\}$.
\end{lemma}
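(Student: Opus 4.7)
The plan is to invoke the Lévy--Itô/Lévy--Khintchine representation. Because $\{S(t):t\geq 0\}$ is a real-valued Lévy process of bounded variation, it has no Brownian component and its characteristic exponent takes the form
$$\psi_S(\theta)=i\gamma\theta+\int_{\mathbb{R}}(e^{i\theta x}-1)\,\nu(dx),\qquad\theta\in\mathbb{R},$$
with $\gamma\in\mathbb{R}$ and a Lévy measure $\nu$ satisfying $\int_{\mathbb{R}}(|x|\wedge 1)\,\nu(dx)<\infty$.

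First I would split the jump measure into positive and negative parts, letting $\nu_+$ be the restriction of $\nu$ to $(0,\infty)$ and $\nu_-$ the pushforward of the restriction of $\nu$ to $(-\infty,0)$ under the reflection $x\mapsto-x$. Both are Lévy measures on $(0,\infty)$ integrating $x\wedge 1$, hence admissible as Lévy measures of subordinators. Next I would pick constants $d_1,d_2\geq 0$ with $d_1-d_2=\gamma$ and define $\{S_1(t):t\geq 0\}$ and $\{S_2(t):t\geq 0\}$ as independent subordinators with drifts $d_1,d_2$ and Lévy measures $\nu_+,\nu_-$ respectively. A direct computation, using independence and the Lévy--Khintchine formula for each subordinator, gives $\mathbb{E}[e^{i\theta(S_1(t)-S_2(t))}]=\exp(t\psi_S(\theta))$ for all $\theta\in\mathbb{R}$, so $\{S_1(t)-S_2(t):t\geq 0\}$ and $\{S(t):t\geq 0\}$ have the same finite-dimensional distributions (both being Lévy processes).

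The remaining point is that $d_1,d_2$ can actually be chosen so that both subordinators are non-null, and this is where the assumption that $S$ is not a subordinator genuinely enters: it forces $\gamma<0$ or $\nu_-\neq 0$. A short case analysis according to which of $\nu_\pm$ vanish then handles the four sub-cases. For instance, if $\nu_+=0=\nu_-$ then necessarily $\gamma<0$ and any $d_1>0$, $d_2=d_1-\gamma>0$ works; if $\nu_+=0$ and $\nu_-\neq 0$ it suffices to take $d_1>\max(\gamma,0)$ so that $d_2=d_1-\gamma\geq 0$ and $S_1$ is non-null by its strictly positive drift; the remaining two cases are handled symmetrically, the case $\nu_+\neq 0\neq\nu_-$ being trivial.

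The main obstacle I foresee is exactly this bookkeeping in the degenerate sub-cases: the distributional identity follows routinely from Lévy--Itô, but the simultaneous non-nullness of $S_1$ and $S_2$ relies on the ``not a subordinator'' hypothesis to leave enough freedom in splitting $\gamma$ into two non-negative drifts with the associated components non-trivial.
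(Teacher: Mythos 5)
Your proof is correct, and at its core it follows the same route as the paper: the paper's own argument (given only as a sketch, with a pointer to the introduction of \cite{Chi}) likewise splits the Lévy measure $\Pi$ of $S$ into its restrictions to $(0,\infty)$ and $(-\infty,0)$, takes the first as the Lévy measure of $S_1$ and the second as that of $-S_2$, and lets $S_1$, $S_2$ be independent. What you add, and what the sketch silently omits, is the bookkeeping for the drift $\gamma$ in the bounded-variation representation and for non-nullness: the paper never says where $\gamma$ goes, and the canonical splitting alone can produce a null component in degenerate cases (for instance, if $S$ has no positive jumps and nonpositive drift, i.e.\ $S$ is minus a subordinator, the ``positive part'' is the zero process). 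Your device of choosing drifts $d_1,d_2\geq 0$ with $d_1-d_2=\gamma$, combined with the observation that ``not a subordinator'' forces $\gamma<0$ or a nontrivial negative jump part, repairs exactly this point and yields non-null $S_1,S_2$ in every case; the verification via characteristic exponents, $\psi_1(\theta)+\psi_2(-\theta)=\psi_S(\theta)$, correctly gives equality of the finite-dimensional distributions since both sides are Lévy processes. The price is that the decomposition is no longer canonical (the drifts are only constrained by $d_1-d_2=\gamma$), but the lemma asserts mere existence, and any non-null subordinator satisfies $m_i=\kappa_{S_i}^\prime(0)>0$, which is all that Remark \ref{rem:cond:gen-LM-consequences} and the subsequent results require, so your version is fully adequate and in fact slightly more complete than the paper's sketch.
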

We can assume that the statement in Lemma \ref{lem:Chi} is known even if we do not have an exact reference for that result
(however a statement of this kind appears in the Introduction of \cite{Chi}). The idea of the proof is the following. If $\Pi(dx)$
is the Lévy measure of a Lévy process with bounded variation, then $1_{(0,\infty)}(x)\Pi(dx)$ and $1_{(-\infty,0)}(x)\Pi(dx)$ are 
again Lévy measures of Lévy processes with bounded variation; thus $1_{(0,\infty)}(x)\Pi(dx)$ is the Lévy measure associated 
to the subordinator $\{S_1(t):t\geq 0\}$, $1_{(-\infty,0)}(x)\Pi(dx)$ is the Lévy measure associated to the opposite of the 
subordinator $\{S_2(t):t\geq 0\}$, and $\{S_1(t):t\geq 0\}$ and $\{S_2(t):t\geq 0\}$ are independent.

\begin{remark}\label{rem:cond:gen-LM-consequences}
	Let $\kappa_{S_1}$ and $\kappa_{S_2}$ be the analogue of the function $\kappa_S$ for the process $\{S(t):t\geq 0\}$ in 
	Condition \ref{cond:real-lighttailed-Levyprocess}, i.e. the functions defined by
	$$\kappa_{S_i}(\theta):=\log\mathbb{E}[e^{\theta S_i(1)}]\quad(\mbox{for}\ i=1,2),$$
	where $\{S_1(t):t\geq 0\}$ and $\{S_2(t):t\geq 0\}$ are the subordinators in Lemma \ref{lem:Chi}. In particular both functions are 
	finite in a neighborhood of the origin. Then, if we set
	$$m_i=\kappa_{S_i}^\prime(0)\quad\mbox{and}\quad q_i=\kappa_{S_i}^{\prime\prime}(0)\quad (\mbox{for}\ i\in\{1,2\}),$$
	we have (we recall that $\kappa_S(\theta)=\kappa_{S_1}(\theta)+\kappa_{S_2}(-\theta)$ for all $\theta\in\mathbb{R}$)
	$$m=\kappa_S^\prime(0)=m_1-m_2\quad\mbox{and}\quad q=\kappa_S^{\prime\prime}(0)=q_1+q_2.$$
	We recall that, since $\{S_1(t):t\geq 0\}$ and $\{S_2(t):t\geq 0\}$ are non-trivial subordinators, then $m_1,m_2>0$.
\end{remark}

The next Propositions \ref{prop:LD-bis}, \ref{prop:weak-convergence-bis} and \ref{prop:ncMD-bis} provide a generalization
of Propositions 3.1, 3.2 and 3.3 in \cite{LeeMacci}, respectively, in which $\{S(t):t\geq 0\}$ is a Skellam process
(and therefore $\{S_1(t):t\geq 0\}$ and $\{S_2(t):t\geq 0\}$ are two Poisson processes with intensities $\lambda_1$ and 
$\lambda_2$, respectively). We start with the reference LDP for the convergence in probability to zero of $\left\{\frac{S_1(L_{\nu_1}^{(1)}(t))-S_2(L_{\nu_2}^{(2)}(t))}{t}:t>0\right\}$. In this first result the case
$\nu_1\neq\nu_2$ is allowed.

\begin{proposition}\label{prop:LD-bis}
	Assume that Condition \ref{cond:gen-LM} holds (therefore we can refer to the independent subordinators
	$\{S_1(t):t\geq 0\}$ and $\{S_2(t):t\geq 0\}$ in Lemma \ref{lem:Chi}). Let $\Psi_{\nu_1,\nu_2}$ be the function defined by
	\begin{equation}\label{eq:LD-GE-limit-type1}
		\Psi_{\nu_1,\nu_2}(\theta):=\left\{\begin{array}{ll}
			(\kappa_{S_1}(\theta))^{1/\nu_1}&\ \mbox{if}\ \theta\geq 0\\
			(\kappa_{S_2}(-\theta))^{1/\nu_2}&\ \mbox{if}\ \theta<0.
		\end{array}\right.
	\end{equation}
    Then $\left\{\frac{S_1(L_{\nu_1}^{(1)}(t))-S_2(L_{\nu_2}^{(2)}(t))}{t}:t>0\right\}$ satisfies the LDP with 
	speed $v_t=t$ and good rate function $J_{\mathrm{LD}}$ defined by
	\begin{equation}\label{eq:LD-type1-rf-Legendre-transform}
		J_{\mathrm{LD}}(x):=\sup_{\theta\in\mathbb{R}}\{\theta x-\Psi_{\nu_1,\nu_2}(\theta)\}.
	\end{equation}
\end{proposition}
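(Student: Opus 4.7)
The plan is to apply the Gärtner--Ellis Theorem (Theorem \ref{th:GE}) to $\{Z(t):t>0\}$, where $Z(t):=\frac{1}{t}\bigl(S_1(L_{\nu_1}^{(1)}(t))-S_2(L_{\nu_2}^{(2)}(t))\bigr)$, with speed $v_t=t$. The main ingredients are the independence assumptions in Condition \ref{cond:gen-LM}, the formula \eqref{eq:MGF-inverse-stable-sub} for the moment generating function of an inverse stable subordinator, and the two asymptotics in \eqref{eq:ML-asymptotics}.

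First, I would use the mutual independence of $\{S_1(t)\}$, $\{S_2(t)\}$, $\{L_{\nu_1}^{(1)}(t)\}$ and $\{L_{\nu_2}^{(2)}(t)\}$ to factor the moment generating function, and then condition on each time-change and invoke \eqref{eq:MGF-inverse-stable-sub} as done in \eqref{eq:MGF-S}, obtaining
\[
\mathbb{E}\!\left[e^{\theta t Z(t)}\right]
=E_{\nu_1}\!\bigl(\kappa_{S_1}(\theta)\,t^{\nu_1}\bigr)\cdot E_{\nu_2}\!\bigl(\kappa_{S_2}(-\theta)\,t^{\nu_2}\bigr).
\]

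Second, I would exploit the structural feature that $S_1,S_2$ are non-null subordinators: the Laplace exponents $\kappa_{S_1},\kappa_{S_2}$ vanish at $0$, are convex, and are strictly increasing on their domains, so $\mathrm{sgn}(\kappa_{S_1}(\theta))=\mathrm{sgn}(\theta)$ and $\mathrm{sgn}(\kappa_{S_2}(-\theta))=\mathrm{sgn}(-\theta)$. Applying \eqref{eq:ML-asymptotics} to each factor shows that, at the scale $\frac{1}{t}\log$, only one factor contributes for each sign of $\theta$: for $\theta\geq 0$ the first factor yields $(\kappa_{S_1}(\theta))^{1/\nu_1}$, while the second is negligible since $\frac{1}{t^{\nu_2}}\log E_{\nu_2}(\kappa_{S_2}(-\theta)t^{\nu_2})\to 0$ combined with $\nu_2<1$ gives $\frac{1}{t}\log E_{\nu_2}(\cdot)\to 0$; a symmetric analysis on $\theta<0$ yields $(\kappa_{S_2}(-\theta))^{1/\nu_2}$. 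This identifies the limit with $\Psi_{\nu_1,\nu_2}(\theta)$ defined in \eqref{eq:LD-GE-limit-type1}.

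Finally, I would verify the hypotheses of Theorem \ref{th:GE}: the origin lies in the interior of $\mathcal{D}(\Psi_{\nu_1,\nu_2})$ thanks to Condition \ref{cond:real-lighttailed-Levyprocess} (which forces both $\kappa_{S_1}$ and $\kappa_{S_2}$ to be finite in a neighborhood of $0$), and lower semicontinuity is inherited from the closed-form expression in \eqref{eq:LD-GE-limit-type1} together with convexity of each $(\kappa_{S_i})^{1/\nu_i}$. The main obstacle is \emph{essential smoothness} of $\Psi_{\nu_1,\nu_2}$: on each open half-line it inherits smoothness from the corresponding $\kappa_{S_i}$, and differentiability at the matching point $\theta=0$ follows because both one-sided derivatives vanish there (since $1/\nu_i>1$ and $\kappa_{S_i}(\theta)\to 0$ as $\theta\to 0$), so the two branches glue with common derivative zero; steepness at the boundary reduces to the usual steepness of each $\kappa_{S_i}$, which is the tacit analogue of the essential smoothness assumption stated explicitly in Proposition \ref{prop:LD}. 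Once this is granted, Gärtner--Ellis delivers the LDP with good rate function $J_{\mathrm{LD}}$ given by \eqref{eq:LD-type1-rf-Legendre-transform}.
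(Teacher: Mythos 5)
Your route to the limiting function is the same as the paper's: factor the moment generating function by the independence in Condition \ref{cond:gen-LM}, apply \eqref{eq:MGF-inverse-stable-sub} to each time-change to get $E_{\nu_1}(\kappa_{S_1}(\theta)t^{\nu_1})E_{\nu_2}(\kappa_{S_2}(-\theta)t^{\nu_2})$, and use the two asymptotics in \eqref{eq:ML-asymptotics}, the negative-argument one making the factor with the ``wrong'' sign negligible at scale $\frac{1}{t}\log$; this matches \eqref{eq:LD-GET-limit-type1} in the paper's proof. Your gluing argument at $\theta=0$ (both one-sided derivatives of the two branches vanish because $1/\nu_i>1$ and $\kappa_{S_i}(0)=0$) is correct and in fact spelled out more explicitly than in the paper.

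The genuine gap is the steepness step. Proposition \ref{prop:LD-bis} contains no essential-smoothness hypothesis, tacit or otherwise --- that is precisely the contrast with Proposition \ref{prop:LD}, where such a hypothesis must be imposed and can fail (Remark \ref{rem:es-not-guaranteed}). So you cannot ``reduce steepness at the boundary to the usual steepness of each $\kappa_{S_i}$'' and treat it as an implicit assumption: you must derive the essential smoothness of $\Psi_{\nu_1,\nu_2}$ from Condition \ref{cond:gen-LM} alone, and steepness of $\kappa_{S_i}$ itself is neither assumed in the paper nor the relevant property. The paper's argument is structural: each branch is $(\kappa_{S_i}(\theta))^{1/\nu_i}$ (in $\theta$ or $-\theta$) with $\kappa_{S_i}$ the Laplace exponent of a non-null subordinator, hence nonnegative, nondecreasing and vanishing at $0$ on $\theta\geq 0$; computing $\frac{d}{d\theta}(\kappa_{S_i}(\theta))^{1/\nu_i}=\frac{1}{\nu_i}(\kappa_{S_i}(\theta))^{1/\nu_i-1}\kappa_{S_i}^\prime(\theta)$, the paper argues that on $\{\theta\geq 0:\kappa_{S_i}(\theta)<\infty\}$ this derivative sweeps out $[0,\infty)$, so that $\Psi_{\nu_1,\nu_2}^\prime$ has range $(-\infty,\infty)$ and steepness at any finite boundary point of $\mathcal{D}(\Psi_{\nu_1,\nu_2})$ follows. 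The point is that here a single nonnegative exponent sits inside each power, so there is no cancellation of the type $\kappa_{S_1}(\theta)+\kappa_{S_2}(-\theta)\leq 0$ that destroyed essential smoothness in Remark \ref{rem:es-not-guaranteed}; for instance, if $\kappa_{S_i}$ is finite on all of $[0,\infty)$ then $\kappa_{S_i}(\theta)\geq m_i\theta$ and the power $1/\nu_i>1$ make the branch superlinear so its derivative still tends to infinity, while if $\kappa_{S_i}$ blows up at a finite point convexity forces the derivative to blow up there. Without an argument of this kind your appeal to the G\"artner--Ellis theorem is incomplete; with it, your proof coincides with the paper's.
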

\begin{proof}
	We prove this proposition by applying the G\"artner Ellis Theorem. More precisely we have to show that
	\begin{equation}\label{eq:LD-GET-limit-type1}
		\lim_{t\to\infty}\frac{1}{t}\log\mathbb{E}\left[e^{t\theta\frac{S_1(L_{\nu_1}^{(1)}(t))-S_2(L_{\nu_2}^{(2)}(t))}{t}}\right]
		=\Psi_{\nu_1,\nu_2}(\theta)\ (\mbox{for all}\ \theta\in\mathbb{R}),
	\end{equation}
	where $\Psi_{\nu_1,\nu_2}$ is the function in \eqref{eq:LD-GE-limit-type1}.
	
	The case $\theta=0$ is immediate. For $\theta\neq 0$ we have
	$$\log\mathbb{E}\left[e^{t\theta\frac{S_1(L_{\nu_1}^{(1)}(t))-S_2(L_{\nu_2}^{(2)}(t))}{t}}\right]
	=\log E_{\nu_1}(\kappa_{S_1}(\theta)t^{\nu_1})+\log E_{\nu_2}(\kappa_{S_2}(-\theta)t^{\nu_2}).$$
	Then, by taking into account the asymptotic behaviour of the Mittag-Leffler function in \eqref{eq:ML-asymptotics}, we have
	$$\lim_{t\to\infty}\frac{1}{t}\log E_{\nu_1}(\kappa_{S_1}(\theta)t^{\nu_1})+\lim_{t\to\infty}
	\frac{1}{t}\log E_{\nu_2}(\kappa_{S_2}(-\theta)t^{\nu_2})=(\kappa_{S_1}(\theta))^{1/\nu_1}\ \mbox{for}\ \theta>0,$$
	and
	$$\lim_{t\to\infty}\frac{1}{t}\log E_{\nu_1}(\kappa_{S_1}(\theta)t^{\nu_1})+\lim_{t\to\infty}
	\frac{1}{t}\log E_{\nu_2}(\kappa_{S_2}(-\theta)t^{\nu_2})=(\kappa_{S_2}(-\theta))^{1/\nu_2}\ \mbox{for}\ \theta<0;$$
	thus the limit in \eqref{eq:LD-GET-limit-type1} is checked. Finally the desired LDP holds because the function 
	$\Psi_{\nu_1,\nu_2}$ is essentially smooth. The essential smoothness of $\Psi_{\nu_1,\nu_2}$ trivially holds if 
	$\Psi_{\nu_1,\nu_2}(\theta)$ is finite everywhere (and differentiable). So now we assume that $\Psi_{\nu_1,\nu_2}(\theta)$ is not
	finite everywhere. For $i=1,2$ we have
	$$\frac{d}{d\theta}(\kappa_{S_i}(\theta))^{1/\nu_i}=\frac{1}{\nu_i}(\kappa_{S_i}(\theta))^{1/\nu_i-1}\kappa_{S_i}^\prime(\theta),$$
    and therefore the range of values of each one of these derivatives (for $\theta\geq 0$ such that $\kappa_{S_i}(\theta)<\infty$)
    is $[0,\infty)$; therefore the range of values of $\Psi_{\nu_1,\nu_2}^\prime(\theta)$ (for $\theta\in\mathbb{R}$ such that $\Psi_{\nu_1,\nu_2}(\theta)<\infty$) is $(-\infty,\infty)$, and the essential smoothness of $\Psi_{\nu_1,\nu_2}$ is proved.
\end{proof}

From now on we assume that $\nu_1$ and $\nu_2$ coincide, and therefore we simply consider the symbol $\nu$, where
$\nu=\nu_1=\nu_2$. Moreover we set
\begin{equation}\label{eq:exponent}
	\alpha_1(\nu):=1-\nu.
\end{equation}

\begin{proposition}\label{prop:weak-convergence-bis}
	Assume that Condition \ref{cond:gen-LM} holds (therefore we can refer to the independent subordinators
	$\{S_1(t):t\geq 0\}$ and $\{S_2(t):t\geq 0\}$ in Lemma \ref{lem:Chi}). Moreover assume that $\nu_1=\nu_2=\nu$ for some 
    $\nu\in(0,1)$ and let $\alpha_1(\nu)$ be defined in \eqref{eq:exponent}. Then $\{t^{\alpha_1(\nu)}\frac{S_1(L_\nu^{(1)}(t))-S_2(L_\nu^{(2)}(t))}{t}:t>0\}$
	converges weakly to $m_1L_\nu^{(1)}(1)-m_2L_\nu^{(2)}(1)$.
\end{proposition}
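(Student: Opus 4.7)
The plan is to adapt the moment generating function computation used in the proof of Proposition \ref{prop:weak-convergence} to the two independent time-change setting. Set $Y_t := t^{\alpha_1(\nu)}\frac{S_1(L_\nu^{(1)}(t))-S_2(L_\nu^{(2)}(t))}{t} = \frac{S_1(L_\nu^{(1)}(t))-S_2(L_\nu^{(2)}(t))}{t^\nu}$. Since the four processes $S_1, S_2, L_\nu^{(1)}, L_\nu^{(2)}$ are mutually independent (using Lemma \ref{lem:Chi} for the splitting of $S$ into $S_1$ and $S_2$), the MGF of $Y_t$ factors, and I can apply formula \eqref{eq:MGF-S} (which rests on \eqref{eq:MGF-inverse-stable-sub}) to each factor:
\begin{equation*}
\mathbb{E}[e^{\theta Y_t}] = \mathbb{E}\!\left[e^{(\theta/t^\nu) S_1(L_\nu^{(1)}(t))}\right]\mathbb{E}\!\left[e^{-(\theta/t^\nu) S_2(L_\nu^{(2)}(t))}\right] = E_\nu\!\left(\kappa_{S_1}(\theta/t^\nu)t^\nu\right)E_\nu\!\left(\kappa_{S_2}(-\theta/t^\nu)t^\nu\right).
\end{equation*}

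Next I would expand $\kappa_{S_i}$ around the origin. Since each $\kappa_{S_i}$ is finite in a neighborhood of $\theta=0$ (Remark \ref{rem:cond:gen-LM-consequences}) with $\kappa_{S_i}(0)=0$ and $\kappa_{S_i}^\prime(0)=m_i$, for every fixed $\theta\in\mathbb{R}$ and $t$ large enough we have $\kappa_{S_1}(\theta/t^\nu)t^\nu = m_1\theta + o(1)$ and $\kappa_{S_2}(-\theta/t^\nu)t^\nu = -m_2\theta + o(1)$. By continuity of the Mittag-Leffler function $E_\nu$, this yields
\begin{equation*}
\lim_{t\to\infty}\mathbb{E}[e^{\theta Y_t}] = E_\nu(m_1\theta)\,E_\nu(-m_2\theta)\quad\text{for all } \theta\in\mathbb{R}.
\end{equation*}

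To conclude, I would identify the right-hand side as the MGF of the proposed limit. By the independence of $L_\nu^{(1)}(1)$ and $L_\nu^{(2)}(1)$ and by \eqref{eq:MGF-inverse-stable-sub},
\begin{equation*}
\mathbb{E}\!\left[e^{\theta(m_1 L_\nu^{(1)}(1)-m_2 L_\nu^{(2)}(1))}\right] = \mathbb{E}\!\left[e^{\theta m_1 L_\nu^{(1)}(1)}\right]\mathbb{E}\!\left[e^{-\theta m_2 L_\nu^{(2)}(1)}\right] = E_\nu(m_1\theta)\,E_\nu(-m_2\theta),
\end{equation*}
which is finite for every $\theta\in\mathbb{R}$. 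Pointwise convergence of MGFs on all of $\mathbb{R}$ towards the MGF of a random variable whose MGF is finite in a neighborhood of the origin implies weak convergence (Curtiss's theorem), so the desired convergence follows.

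The only mildly delicate point is checking that each MGF factor is well defined for $t$ sufficiently large; this reduces to noting that $\theta/t^\nu$ eventually lies in the common neighborhood of $0$ on which $\kappa_{S_1}$ and $\kappa_{S_2}$ are finite, and that $E_\nu$ is well defined on the whole real line. No essential smoothness or LDP machinery is needed here — the proof is a direct MGF computation that mirrors Proposition \ref{prop:weak-convergence}, the only structural difference being the product of two Mittag-Leffler factors arising from the two independent time changes.
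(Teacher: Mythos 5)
Your proposal is correct and follows essentially the same route as the paper's proof: factor the moment generating function into two Mittag--Leffler factors via independence, expand $\kappa_{S_i}$ to first order at the origin, pass to the limit, and identify $E_\nu(m_1\theta)E_\nu(-m_2\theta)$ as the moment generating function of $m_1L_\nu^{(1)}(1)-m_2L_\nu^{(2)}(1)$ using \eqref{eq:MGF-inverse-stable-sub}. Your explicit appeal to Curtiss's theorem just makes precise the final step that the paper leaves implicit.
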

\begin{proof}
	We have to check that
	$$\lim_{t\to\infty}\mathbb{E}\left[e^{\theta t^{\alpha_1(\nu)}\frac{S_1(L_\nu^{(1)}(t))-S_2(L_\nu^{(2)}(t))}{t}}\right]=
	\underbrace{\mathbb{E}\left[e^{\theta (m_1L_\nu^{(1)}(1)-m_2L_\nu^{(2)}(1))}\right]}
	_{=E_\nu(m_1\theta)E_\nu(-m_2\theta)}\ (\mbox{for all}\ \theta\in\mathbb{R})$$
	(here we take into account that $L_\nu^{(1)}(1)$ and $L_\nu^{(2)}(1)$ are i.i.d., and the expression of the
	moment generating function in \eqref{eq:MGF-inverse-stable-sub}). This can be readily done noting that
	\begin{multline*}
		\mathbb{E}\left[e^{\theta t^{\alpha_1(\nu)}\frac{S_1(L_\nu^{(1)}(t))-S_2(L_\nu^{(2)}(t))}{t}}\right]
		=\mathbb{E}\left[e^{\theta\frac{S_1(L_\nu^{(1)}(t))-S_2(L_\nu^{(2)}(t))}{t^\nu}}\right]\\
		=E_\nu\left(\kappa_{S_1}\left(\frac{\theta}{t^\nu}\right)t^\nu\right)
		E_\nu\left(\kappa_{S_2}\left(-\frac{\theta}{t^\nu}\right)t^\nu\right)\\
		=E_\nu\left(\left(m_1\frac{\theta}{t^\nu}+o\left(\frac{1}{t^\nu}\right)\right)t^\nu\right)
		E_\nu\left(\left(-m_2\frac{\theta}{t^\nu}+o\left(\frac{1}{t^\nu}\right)t^\nu\right)\right),
	\end{multline*}
	and we get the desired limit letting $t$ go to infinity (for each fixed $\theta\in\mathbb{R}$).
\end{proof}

\begin{proposition}\label{prop:ncMD-bis}
	Assume that Condition \ref{cond:gen-LM} holds (therefore we can refer to the independent subordinators
		$\{S_1(t):t\geq 0\}$ and $\{S_2(t):t\geq 0\}$ in Lemma \ref{lem:Chi}). Moreover assume that 
	$\nu_1=\nu_2=\nu$ for some $\nu\in(0,1)$ and let $\alpha_1(\nu)$ be defined in \eqref{eq:exponent}. Then, 
	for every family of positive numbers $\{a_t:t>0\}$ such that \eqref{eq:MDconditions} holds, the family of 
	random variables $\left\{(a_tt)^{\alpha_1(\nu)}\frac{S_1(L_\nu^{(1)}(t))-S_2(L_\nu^{(2)}(t))}{t}:t>0\right\}$ satisfies
	the LDP with speed $1/a_t$ and good rate function $J_{\mathrm{MD}}$ defined by
	$$J_{\mathrm{MD}}(x):=\left\{\begin{array}{ll}
		(\nu^{\nu/(1-\nu)}-\nu^{1/(1-\nu)})\left(\frac{x}{m_1}\right)^{1/(1-\nu)}&\ \mbox{if}\ x\geq 0\\
		(\nu^{\nu/(1-\nu)}-\nu^{1/(1-\nu)})\left(-\frac{x}{m_2}\right)^{1/(1-\nu)}&\ \mbox{if}\ x<0.
	\end{array}\right.$$
\end{proposition}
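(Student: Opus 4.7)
The plan is to invoke the G\"artner Ellis Theorem (Theorem \ref{th:GE}) along the same lines as Proposition \ref{prop:ncMD}, but exploiting the independence of $\{S_1(t):t\geq 0\}$, $\{S_2(t):t\geq 0\}$, $\{L_\nu^{(1)}(t):t\geq 0\}$ and $\{L_\nu^{(2)}(t):t\geq 0\}$ to split the scaled moment generating function into two independent Mittag-Leffler factors. Writing $Z_t:=(a_tt)^{\alpha_1(\nu)}\frac{S_1(L_\nu^{(1)}(t))-S_2(L_\nu^{(2)}(t))}{t}$, the coefficient multiplying $S_1(L_\nu^{(1)}(t))-S_2(L_\nu^{(2)}(t))$ inside $\mathbb{E}[e^{\theta Z_t/a_t}]$ equals $\theta/(a_tt)^\nu$; thus by independence and \eqref{eq:MGF-inverse-stable-sub} we get
\begin{equation*}
a_t\log\mathbb{E}\left[e^{\theta Z_t/a_t}\right]
=a_t\log E_\nu\left(\kappa_{S_1}\left(\tfrac{\theta}{(a_tt)^\nu}\right)t^\nu\right)
+a_t\log E_\nu\left(\kappa_{S_2}\left(-\tfrac{\theta}{(a_tt)^\nu}\right)t^\nu\right),
\end{equation*}
reducing the problem to two one-sided asymptotics of the Mittag-Leffler function.

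Next I would use the Taylor expansion $\kappa_{S_i}(u)=m_iu+o(u)$ as $u\to 0$ (which is available thanks to Remark \ref{rem:cond:gen-LM-consequences}), so that the argument of the first Mittag-Leffler term becomes $\frac{m_1\theta}{a_t^\nu}+o(a_t^{-\nu})$ and that of the second becomes $-\frac{m_2\theta}{a_t^\nu}+o(a_t^{-\nu})$. For $\theta>0$ the first argument diverges to $+\infty$ and the second to $-\infty$, so by the two cases of \eqref{eq:ML-asymptotics} the first summand tends to $(m_1\theta)^{1/\nu}$ and the second to $0$; for $\theta<0$ the roles are reversed. This yields the limit function
$$\Lambda_\nu(\theta):=\left\{\begin{array}{ll}(m_1\theta)^{1/\nu}&\ \mbox{if}\ \theta\geq 0\\(-m_2\theta)^{1/\nu}&\ \mbox{if}\ \theta<0,\end{array}\right.$$
which is finite, convex and $C^1$ on $\mathbb{R}$ (the two one-sided derivatives at $\theta=0$ both vanish, since $(1-\nu)/\nu>0$), and whose derivative ranges over all of $\mathbb{R}$. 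Hence $\Lambda_\nu$ is essentially smooth, and the G\"artner Ellis Theorem delivers the LDP with good rate function $J_{\mathrm{MD}}=\Lambda_\nu^*$.

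Finally I would compute the Legendre-Fenchel transform: for $x\geq 0$ the supremum in $\Lambda_\nu^*(x)$ is attained at $\theta_x=\frac{1}{m_1}(\nu x/m_1)^{\nu/(1-\nu)}\geq 0$ and a direct simplification (exactly as in the case $m\neq 0$ of Proposition \ref{prop:ncMD}, with $m$ replaced by $m_1$) produces the first branch of $J_{\mathrm{MD}}$; for $x<0$ the supremum is attained at $\theta_x=-\frac{1}{m_2}(-\nu x/m_2)^{\nu/(1-\nu)}<0$ and the symmetric computation (with $m$ replaced by $-m_2$) gives the second branch, with both branches agreeing at $x=0$. The main delicate point will be the sign bookkeeping when passing to the limit in the two Mittag-Leffler summands: it is precisely the sign-dependence of \eqref{eq:ML-asymptotics} that dictates the piecewise structure of $\Lambda_\nu$ and, via Legendre-Fenchel duality, the two-branch form of $J_{\mathrm{MD}}$. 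Everything else is a routine generalisation of the Skellam computations in \cite{LeeMacci}.
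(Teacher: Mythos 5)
Your proposal is correct and follows essentially the same route as the paper: apply the G\"artner--Ellis Theorem after splitting the scaled moment generating function, via independence, into two Mittag--Leffler factors with arguments $\kappa_{S_1}(\theta/(a_tt)^\nu)t^\nu$ and $\kappa_{S_2}(-\theta/(a_tt)^\nu)t^\nu$, Taylor-expand the cumulants, use the two regimes of \eqref{eq:ML-asymptotics} to obtain the limit $\widetilde{\Psi}_\nu(\theta)=(m_1\theta)^{1/\nu}$ for $\theta\geq 0$ and $(-m_2\theta)^{1/\nu}$ for $\theta<0$, and compute its Legendre--Fenchel transform with the same maximizers. The only cosmetic difference is that you additionally note convexity and the full range of $\widetilde{\Psi}_\nu'$, which the paper does not need since finiteness and differentiability already give essential smoothness.
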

\begin{proof}
	We prove this proposition by applying the G\"artner Ellis Theorem. More precisely we have to show that
	\begin{equation}\label{eq:NCMD-GET-limit-type1}
		\lim_{t\to\infty}\frac{1}{1/a_t}\log
		\mathbb{E}\left[e^{\frac{\theta}{a_t}(a_tt)^{\alpha_1(\nu)}\frac{S_1(L_\nu^{(1)}(t))-S_2(L_\nu^{(2)}(t))}{t}}\right]
		=\widetilde{\Psi}_\nu(\theta)\ (\mbox{for all}\ \theta\in\mathbb{R}),
	\end{equation}
	where $\widetilde{\Psi}_\nu$ is the function defined by
	$$\widetilde{\Psi}_\nu(\theta):=\left\{\begin{array}{ll}
		(m_1\theta)^{1/\nu}&\ \mbox{if}\ \theta\geq 0\\
		(-m_2\theta)^{1/\nu}&\ \mbox{if}\ \theta<0;
	\end{array}\right.$$
	indeed, since the function $\widetilde{\Psi}_\nu$ is finite (for all $\theta\in\mathbb{R}$) and differentiable, 
	the desired LDP holds noting that the Legendre-Fenchel transform $\widetilde{\Psi}_\nu^*$ of 
	$\widetilde{\Psi}_\nu$, i.e. the function $\widetilde{\Psi}_\nu^*$ defined by
	\begin{equation}\label{eq:MD-Legendre-transform-bis}
		\widetilde{\Psi}_\nu^*(x):=
		\sup_{\theta\in\mathbb{R}}\{\theta x-\widetilde{\Psi}_\nu(\theta)\}\ (\mbox{for all}\ x\in\mathbb{R}),
	\end{equation}
	coincides with the function $J_{\mathrm{MD}}$ in the statement of the proposition (for $x=0$ the
	supremum in \eqref{eq:MD-Legendre-transform-bis} is attained at $\theta=0$, for $x>0$ that supremum is attained at 
	$\theta=\frac{1}{m_1}(\frac{\nu x}{m_1})^{\nu/(1-\nu)}$, for $x<0$ that supremum is attained at 
	$\theta=-\frac{1}{m_2}(-\frac{\nu x}{m_2})^{\nu/(1-\nu)}$).
	
	So we conclude the proof by checking the limit in \eqref{eq:NCMD-GET-limit-type1}. The case $\theta=0$ 
	is immediate. For $\theta\neq 0$ we have
	\begin{multline*}
		\log\mathbb{E}\left[e^{\frac{\theta}{a_t}(a_tt)^{\alpha_1(\nu)}\frac{S_1(L_\nu^{(1)}(t))-S_2(L_\nu^{(2)}(t))}{t}}\right]
		=\log\mathbb{E}\left[e^{\theta\frac{S_1(L_\nu^{(1)}(t))-S_2(L_\nu^{(2)}(t))}{(a_tt)^\nu}}\right]\\
		=\log E_\nu\left(\kappa_{S_1}\left(\frac{\theta}{(a_tt)^\nu}\right)t^\nu\right)+
		\log E_\nu\left(\kappa_{S_2}\left(-\frac{\theta}{(a_tt)^\nu}\right)t^\nu\right)\\
		=\log E_\nu\left(\left(m_1\frac{\theta}{(a_tt)^\nu}+o\left(\frac{1}{(a_tt)^\nu}\right)\right)t^\nu\right)
		+\log E_\nu\left(\left(-m_2\frac{\theta}{(a_tt)^\nu}+o\left(\frac{1}{(a_tt)^\nu}\right)\right)t^\nu\right)\\
		=\log E_\nu\left(\frac{m_1}{a_t^\nu}\left(\theta+(a_tt)^\nu o\left(\frac{1}{(a_tt)^\nu}\right)\right)\right)
		+\log E_\nu\left(\frac{m_2}{a_t^\nu}\left(-\theta+(a_tt)^\nu o\left(\frac{1}{(a_tt)^\nu}\right)\right)\right).
	\end{multline*}
	Then, by taking into account the asymptotic behaviour of the Mittag-Leffler function in \eqref{eq:ML-asymptotics},
	we have
	$$\lim_{t\to\infty}\frac{1}{1/a_t}\log \mathbb{E}\left[e^{\frac{\theta}{a_t}(a_tt)^{\alpha_1(\nu)}\frac{S_1(L_\nu^{(1)}(t))-S_2(L_\nu^{(2)}(t))}{t}}\right]
	=(m_1\theta)^{1/\nu}\quad \mbox{for}\ \theta>0,$$
	and
	$$\lim_{t\to\infty}\frac{1}{1/a_t}\log \mathbb{E}\left[e^{\frac{\theta}{a_t}(a_tt)^{\alpha_1(\nu)}\frac{S_1(L_\nu^{(1)}(t))-S_2(L_\nu^{(2)}(t))}{t}}\right]
	=(-m_2\theta)^{1/\nu}\quad \mbox{for}\ \theta<0.$$
	Thus the limit in \eqref{eq:NCMD-GET-limit-type1} is checked.
\end{proof}

\section{A discussion on possible generalizations}\label{sec:referee}
In Sections \ref{sec:gen-results-Beghin-Macci-2022} and \ref{sec:gen-results-Lee-Macci} we have proved two moderate deviation
results, i.e. two collections of three results: a reference LDP (Propositions \ref{prop:LD} and \ref{prop:LD-bis}),
a weak convergence result (Propositions \ref{prop:weak-convergence} and \ref{prop:weak-convergence-bis}), and a collection
of LDPs which depend on some positive scalings $\{a_t:t>0\}$ which satisfies condition \eqref{eq:MDconditions} (Propositions
\ref{prop:ncMD} and \ref{prop:ncMD-bis}).

Then one can wonder if it is possible to consider more general time-changes to have a generalization of at least one of these
three results in each collection. This question is quite natural because, if one looks at the results presented above, the
asymptotic behaviour of Mittag-Leffler function (see in \eqref{eq:ML-asymptotics}) seems to plays a role in the applications 
of G\"artner Ellis Theorem (thus in all the results, except those of weak convergence).

In what follows we consider random time-changes which satisfy the following condition.

\begin{condition}\label{cond:gen-iss}
	Let $\{L_f(t):t\geq 0\}$ be a nonnegative process such that there exists
	$$\lim_{t\to\infty}\frac{1}{t}\log\mathbb{E}[e^{\rho L_f(t)}]=\left\{\begin{array}{ll}
		f(\rho)&\ \mbox{if}\ \rho\geq 0\\
		0&\ \mbox{if}\ \rho<0
	\end{array}\right.=:\Upsilon_f(\rho),$$
	where $f$ is a regular, convex and non-decreasing (real-valued) function defined on $[0,\infty)$ such that 
	$f(0)=0$ and $f^\prime(0)=0$ (here $f^\prime(0)$ is the right derivative of $f$ at $\eta=0$).
\end{condition}

This condition is quite natural way if one deals with inverse of heavy-tailed subordinators, indeed it is 
satisfied by inverse stable subordinators. This will be explained in the following remark.

\begin{remark}
	Condition \ref{cond:gen-iss} holds if $\{L_f(t):t\geq 0\}$ is the inverse of subordinator $\{V(t):t\geq 0\}$
	such that $\mathbb{E}[V(1)]=\infty$. In such a case, if we consider the function
	$$\kappa_V(\xi):=\log\mathbb{E}[e^{\xi V(1)}],$$
	we have a regular and increasing function for $\xi\leq 0$, $\kappa_V(0)=0$ and $\kappa_V^\prime(0-)=\infty$ 
	(here $\kappa_V^\prime(0-)=\infty$ is the left derivative of $\kappa_V$ at $\xi=0$) and
	$\kappa_V(\xi)=\infty$ for $\xi\geq 0$. Then the restriction of $\kappa_V$ over $(-\infty,0]$ is invertible,
	and assume values in $(-\infty,0]$; so we denote such inverse function by $\kappa_V^{-1}$. Then one can check that
	$$f(\rho)=-\kappa_V^{-1}(-\rho)\ (\mbox{for}\ \rho\geq 0),$$
	and this agrees with formulas (12)-(13) in \cite{GlynnWhitt}. In particular we recover the case of stable 
	subordinators with with $\kappa_V(\xi)=-(-\xi)^\nu$ for $\xi\leq 0$ and $f(\rho)=\rho^{1/\nu}$ for $\rho\geq 0$.
\end{remark}

Then we can present a generalization of the reference LDPs presented in Propositions \ref{prop:LD} and 
\ref{prop:LD-bis}. The proofs are very similar to the ones presented for those propositions, and we omit the details.

\begin{proposition}\label{prop:LD-gen}
	Let $\{S(t):t\geq 0\}$ be a real-valued Lévy process as in Condition \ref{cond:real-lighttailed-Levyprocess};
    moreover let $\{L_f(t):t\geq 0\}$ be real-valued Lévy process as in Condition \ref{cond:gen-iss}, and 
    assume that $\{S(t):t\geq 0\}$ and $\{L_f(t):t\geq 0\}$ are independent. Furthermore we consider the function
		$$\Lambda_{f,S}(\theta):=\Upsilon_f(\kappa_S(\theta))=\left\{\begin{array}{ll}
			f(\kappa_S(\theta))&\ \mbox{if}\ \kappa_S(\theta)\geq 0\\
			0&\ \mbox{if}\ \kappa_S(\theta)<0,
		\end{array}\right.$$
	and assume that it is an essentially smooth function. Then $\left\{\frac{S(L_f(t))}{t}:t>0\right\}$ 
	satisfies the LDP with speed $v_t=t$ and good rate function $I_{\mathrm{LD},f}$ defined by
		$$I_{\mathrm{LD},f}(x):=\sup_{\theta\in\mathbb{R}}\{\theta x-\Lambda_{f,S}(\theta)\}.$$
\end{proposition}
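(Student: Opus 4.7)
The plan is to mimic exactly the strategy used in the proof of Proposition \ref{prop:LD}, replacing the explicit Mittag-Leffler asymptotics by the logarithmic moment generating function asymptotics provided by Condition \ref{cond:gen-iss}. So the goal is to apply the G\"artner Ellis Theorem (Theorem \ref{th:GE}) to the family $\left\{\frac{S(L_f(t))}{t}:t>0\right\}$ with speed $v_t=t$, and the hypothesis of essential smoothness on $\Lambda_{f,S}$ is what makes this legitimate.

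First I would compute the logarithmic moment generating function. Since $\{S(t):t\geq 0\}$ and $\{L_f(t):t\geq 0\}$ are independent, conditioning on $L_f(t)$ and using $\mathbb{E}[e^{\theta S(s)}]=e^{s\kappa_S(\theta)}$ yields, for every $\theta\in\mathbb{R}$ such that $\kappa_S(\theta)<\infty$,
$$\mathbb{E}\bigl[e^{\theta S(L_f(t))}\bigr]=\mathbb{E}\bigl[e^{\kappa_S(\theta)L_f(t)}\bigr].$$
Then I would take logarithms, divide by $t$, and apply Condition \ref{cond:gen-iss} with $\rho=\kappa_S(\theta)$ to obtain
$$\lim_{t\to\infty}\frac{1}{t}\log\mathbb{E}\bigl[e^{\theta S(L_f(t))}\bigr]=\Upsilon_f(\kappa_S(\theta))=\Lambda_{f,S}(\theta),$$
where the two cases in the piecewise definition of $\Lambda_{f,S}$ in the statement correspond exactly to the two cases in the definition of $\Upsilon_f$ in Condition \ref{cond:gen-iss}.

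Next I would verify the hypotheses of the G\"artner Ellis Theorem: since $\kappa_S$ is finite in a neighborhood of the origin (Condition \ref{cond:real-lighttailed-Levyprocess}) with $\kappa_S(0)=0$, and $f$ is defined and regular on $[0,\infty)$ with $f(0)=0$, the function $\Lambda_{f,S}$ is finite in a neighborhood of $\theta=0$, so $0$ lies in the interior of $\mathcal{D}(\Lambda_{f,S})$; essential smoothness is assumed explicitly; and lower semi-continuity follows from the regularity of $f$ and $\kappa_S$ on their respective effective domains together with the convention $\Lambda_{f,S}(\theta)=+\infty$ outside. The G\"artner Ellis Theorem then yields the LDP with speed $v_t=t$ and good rate function $\Lambda_{f,S}^*=I_{\mathrm{LD},f}$, as desired.

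The only step that requires genuine care (and is the analogue of the delicate point highlighted in Remark \ref{rem:es-not-guaranteed}) is the verification of essential smoothness in concrete examples; however, in the statement of Proposition \ref{prop:LD-gen} this is simply assumed, so the main obstacle disappears at the abstract level and the argument reduces to the two-line computation of the limit above combined with a direct invocation of Theorem \ref{th:GE}.
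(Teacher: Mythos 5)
Your proposal is correct and matches the paper's intended argument: the paper itself omits the details, stating only that the proof is ``very similar'' to that of Proposition \ref{prop:LD}, and your conditioning step $\mathbb{E}[e^{\theta S(L_f(t))}]=\mathbb{E}[e^{\kappa_S(\theta)L_f(t)}]$ followed by Condition \ref{cond:gen-iss} and the G\"artner Ellis Theorem is exactly that adaptation. Nothing further is needed.
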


\begin{proposition}\label{prop:LD-bis-gen}
	Let $\{S(t):t\geq 0\}$ be a real-valued Lévy process as in Condition \ref{cond:real-lighttailed-Levyprocess},
	and let $\{L_{f_1}^{(1)}(t):t\geq 0\}$ and $\{L_{f_2}^{(2)}(t):t\geq 0\}$ be two independent processes as in
	Condition \ref{cond:gen-iss} (for some functions $f_1$ and $f_2$, respectively), and independent of 
	$\{S(t):t\geq 0\}$.	Moreover assume that $\{S(t):t\geq 0\}$ has bounded variation, and it is not a subordinator
	(therefore we can refer to the independent subordinators $\{S_1(t):t\geq 0\}$ and $\{S_2(t):t\geq 0\}$ in 
	Lemma \ref{lem:Chi}). Then let $\Psi_{f_1,f_2}$ be the function defined by
    	$$\Psi_{f_1,f_2}(\theta):=\left\{\begin{array}{ll}
    		f_1(\kappa_{S_1}(\theta))&\ \mbox{if}\ \theta\geq 0\\
    		f_2(\kappa_{S_2}(-\theta))&\ \mbox{if}\ \theta<0,
    	\end{array}\right.$$
    and assume that is essentially smooth. Then $\left\{\frac{S_1(L_{f_1}^{(1)}(t))-S_2(L_{f_2}^{(2)}(t))}{t}:t>0\right\}$ 
    satisfies the LDP with speed $v_t=t$ and good rate function $J_{\mathrm{LD},f_1,f_2}$ defined by
    	$$J_{\mathrm{LD},f_1,f_2}(x):=\sup_{\theta\in\mathbb{R}}\{\theta x-\Psi_{f_1,f_2}(\theta)\}.$$
\end{proposition}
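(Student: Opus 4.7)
The plan is to follow the strategy of the proof of Proposition \ref{prop:LD-bis} almost verbatim, with the single modification that the asymptotics \eqref{eq:ML-asymptotics} for the Mittag-Leffler function are replaced by the more general limit assumed in Condition \ref{cond:gen-iss}. Concretely, I would again apply the Gärtner-Ellis Theorem (Theorem \ref{th:GE}) to
$Z_t:=\frac{S_1(L_{f_1}^{(1)}(t))-S_2(L_{f_2}^{(2)}(t))}{t}$, and show that
$$\lim_{t\to\infty}\frac{1}{t}\log\mathbb{E}[e^{t\theta Z_t}]=\Psi_{f_1,f_2}(\theta)\quad(\text{for all }\theta\in\mathbb{R}).$$

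To compute that limit, I would use the mutual independence of $\{S(t):t\geq 0\}$ (hence of $\{S_1(t):t\geq 0\}$ and $\{S_2(t):t\geq 0\}$) and of the two time changes, condition on $L_{f_1}^{(1)}(t)$ and $L_{f_2}^{(2)}(t)$, and invoke the Lévy property to rewrite
$$\mathbb{E}[e^{\theta(S_1(L_{f_1}^{(1)}(t))-S_2(L_{f_2}^{(2)}(t)))}]=\mathbb{E}[e^{\kappa_{S_1}(\theta)L_{f_1}^{(1)}(t)}]\cdot\mathbb{E}[e^{\kappa_{S_2}(-\theta)L_{f_2}^{(2)}(t)}].$$
Taking $\frac{1}{t}\log(\cdot)$ and letting $t\to\infty$, Condition \ref{cond:gen-iss} (applied pointwise with the fixed scalars $\rho=\kappa_{S_1}(\theta)$ and $\rho=\kappa_{S_2}(-\theta)$) gives, respectively, $\Upsilon_{f_1}(\kappa_{S_1}(\theta))$ and $\Upsilon_{f_2}(\kappa_{S_2}(-\theta))$.

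To identify this sum with $\Psi_{f_1,f_2}(\theta)$ as written in the statement, I would use the fact that $\{S_1(t):t\geq 0\}$ and $\{S_2(t):t\geq 0\}$ are non-null subordinators, so that by Remark \ref{rem:cond:gen-LM-consequences} we have $m_1,m_2>0$, and hence $\kappa_{S_1}$ and $\kappa_{S_2}$ are strictly increasing in a neighborhood of the origin with $\kappa_{S_i}(0)=0$. Therefore $\kappa_{S_1}(\theta)\geq 0$ iff $\theta\geq 0$ and $\kappa_{S_2}(-\theta)\geq 0$ iff $\theta\leq 0$. Consequently, for $\theta>0$ only the $S_1$-term contributes and yields $f_1(\kappa_{S_1}(\theta))$, while for $\theta<0$ only the $S_2$-term contributes and yields $f_2(\kappa_{S_2}(-\theta))$; this is exactly $\Psi_{f_1,f_2}(\theta)$ (the case $\theta=0$ being trivial). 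Since essential smoothness of $\Psi_{f_1,f_2}$ is assumed in the hypothesis, Gärtner-Ellis delivers the LDP with good rate function given by the Legendre-Fenchel transform $J_{\mathrm{LD},f_1,f_2}$.

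The genuinely new point, compared with Proposition \ref{prop:LD-bis}, is that we can no longer argue essential smoothness for free from the explicit form $(\kappa_{S_i}(\theta))^{1/\nu_i}$: for general $f_i$ the derivative structure is not under control, which is precisely why essential smoothness is promoted to a hypothesis. The main (and only) conceptual obstacle is therefore verifying that Condition \ref{cond:gen-iss} can be applied with a constant argument $\rho$; everything else is a straightforward substitution of $\Upsilon_{f_i}(\cdot)$ for the Mittag-Leffler asymptotics used in Proposition \ref{prop:LD-bis}.
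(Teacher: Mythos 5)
Your proposal is correct and is essentially the proof the paper has in mind: the authors omit the details precisely because the argument is the proof of Proposition \ref{prop:LD-bis} with the Mittag-Leffler asymptotics \eqref{eq:ML-asymptotics} replaced by the limit in Condition \ref{cond:gen-iss} (applied with the fixed scalars $\rho=\kappa_{S_1}(\theta)$ and $\rho=\kappa_{S_2}(-\theta)$), the factorization via independence and conditioning, the sign analysis of $\kappa_{S_1}(\theta)$ and $\kappa_{S_2}(-\theta)$ using that $S_1,S_2$ are non-null subordinators, and essential smoothness now taken as a hypothesis rather than verified from an explicit formula. No gaps beyond the technicalities the paper itself glosses over (e.g.\ the extended-real case $\kappa_{S_i}(\theta)=\infty$ in the G\"artner--Ellis limit).
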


Finally we discuss a possible way to generalize Propositions \ref{prop:ncMD} and \ref{prop:ncMD-bis}
when Condition \ref{cond:gen-iss} holds, together with some possible further conditions. Firstly one should have the analogue of 
Propositions \ref{prop:weak-convergence} and \ref{prop:weak-convergence-bis}; thus one should have the weak convergence of
$$\left\{\frac{S(L_f(t))}{t^{1-\alpha(f)}}:t>0\right\}
\ \mbox{(for some $\alpha(f)\in(0,1)$ which plays the role of $\alpha(\nu)$ in \eqref{eq:exponents})}$$
and
$$\left\{\frac{S_1(L_f^{(1)}(t))-S_2(L_f^{(2)}(t))}{t^{1-\alpha_1(f)}}\right\}
\ \mbox{(for some $\alpha_1(f)\in(0,1)$ which plays the role of $\alpha_1(\nu)$ in \eqref{eq:exponent})}$$
to some non-degenerating random variables. In particular we expect that, as happens in the previous sections (see eqs.
\eqref{eq:exponents} and \eqref{eq:exponent}), $\alpha(f)=\alpha_1(f)$ when the Lévy process $\{S(t):t\geq 0\}$ is not centered 
(i.e. $m\neq 0$ in Section \ref{sec:gen-results-Beghin-Macci-2022}, or $m_1\neq m_2$ in Section \ref{sec:gen-results-Lee-Macci}),
and $1-\alpha(f)=\frac{1-\alpha_1(f)}{2}$ when the Lévy process $\{S(t):t\geq 0\}$ is centered.

Then one should be able to handle some suitable limits which follow from suitable applications of the G\"artner Ellis Theorem;
more precisely, for every choice of the positive scalings $\{a_t:t>0\}$ such that \eqref{eq:MDconditions} holds, we mean
$$a_t\log\mathbb{E}\left[\exp\left(\kappa_S\left(\frac{\theta}{(a_tt)^{1-\alpha(f)}}\right)L_f(t)\right)\right]$$
for the possible generalization of Proposition \ref{prop:ncMD}, and
$$a_t\left(\log\mathbb{E}\left[\exp\left(\kappa_{S_1}\left(\frac{\theta}{(a_tt)^{1-\alpha_1(f)}}\right)L_f(t)\right)\right]+
\log\mathbb{E}\left[\exp\left(\kappa_{S_2}\left(-\frac{\theta}{(a_tt)^{1-\alpha_1(f)}}\right)L_f(t)\right)\right]\right)$$
for the possible generalization of Proposition \ref{prop:ncMD-bis}.

In our opinion we can prove the generalizations of the other results by requiring some other conditions, and Condition 
\ref{cond:gen-iss} is not enough. We also point out that, if it is possible to prove these generalizations,
the rate functions could not have an explicit expression, and only variational formulas would be available; see 
$I_{\mathrm{MD}}(\cdot;m)$ in Proposition \ref{prop:ncMD} (which can be derived from the variational formula 
\eqref{eq:MD-Legendre-transform}) and $J_{\mathrm{MD}}$ in Proposition \ref{prop:ncMD-bis} (which can be derived from the 
variational formula \eqref{eq:MD-Legendre-transform-bis}).

\section{Comparisons between rate functions}\label{sec:comparisons}
In this section $\{S(t):t\geq 0\}$ is a real-valued Lévy process as in Condition \ref{cond:real-lighttailed-Levyprocess}, with
bounded variation, and it is not a subordinator; thus we can refer to both Conditions \ref{cond:gen-BM} and \ref{cond:gen-LM}, 
and in particular (as stated in Lemma \ref{lem:Chi}) we can refer to the non-trivial independent subordinators 
$\{S_1(t):t\geq 0\}$ and $\{S_2(t):t\geq 0\}$ such that $\{S(t):t\geq 0\}$ is distributed as $\{S_1(t)-S_2(t):t\geq 0\}$.
In particular we can refer to the LDPs in Propositions \ref{prop:LD} and \ref{prop:LD-bis}, which are governed by
the rate functions $I_{\mathrm{LD}}$ and $J_{\mathrm{LD}}$, and to the classes of LDPs in Propositions \ref{prop:ncMD} and 
\ref{prop:ncMD-bis}, which are governed by the rate functions $I_{\mathrm{MD}}(\cdot;m)=I_{\mathrm{MD}}(\cdot;m_1-m_2)$ and 
$J_{\mathrm{MD}}$. All these rate functions uniquely vanish at $x=0$. So, by arguing as in \cite{LeeMacci} (Section 5), we present 
some generalizations of the comparisons between rate functions (at least around $x=0$) presented in that reference. Those comparisons 
allow to compare different convergences to zero; this could be explained by adapting the explanations in \cite{LeeMacci}
(Section 5), and here we omit the details.

\begin{remark}\label{rem:simplified-notation-psi}
	Throughout this section we always assume that $\nu_1=\nu_2=\nu$ for some $\nu\in(0,1)$. So we simply write
	$\Psi_\nu$ in place of the function $\Psi_{\nu_1,\nu_2}$ in Proposition \ref{prop:LD-bis} (see \eqref{eq:LD-GE-limit-type1}).
\end{remark}

We start by comparing $J_{\mathrm{LD}}$ in Proposition \ref{prop:LD-bis} and $I_{\mathrm{LD}}$ in Proposition 
\ref{prop:LD}.

\begin{proposition}\label{prop:comparison-rf-LD}
	Assume that $\nu_1=\nu_2=\nu$ for some $\nu\in(0,1)$. Then $J_{\mathrm{LD}}(0)=I_{\mathrm{LD}}(0)=0$ and, for
	$x\neq 0$, we have $I_{\mathrm{LD}}(x)>J_{\mathrm{LD}}(x)>0$.
\end{proposition}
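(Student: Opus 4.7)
My plan is to reduce the statement to a pointwise comparison between the two cumulant-type functions $\Lambda_{\nu,S}$ of Proposition \ref{prop:LD} and $\Psi_\nu$ of Proposition \ref{prop:LD-bis} (with the simplified notation of Remark \ref{rem:simplified-notation-psi}), and then transfer this through the Legendre-Fenchel transform. Writing $\kappa_S(\theta)=\kappa_{S_1}(\theta)+\kappa_{S_2}(-\theta)$ and using that for a non-null subordinator $S_i$ the function $\kappa_{S_i}$ is strictly negative on the negative half-line, I would get $\kappa_S(\theta)<\kappa_{S_1}(\theta)$ for $\theta>0$ and $\kappa_S(\theta)<\kappa_{S_2}(-\theta)$ for $\theta<0$. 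Combined with the convexity bounds $\kappa_{S_1}(\theta)\geq m_1\theta>0$ for $\theta>0$ and $\kappa_{S_2}(-\theta)\geq -m_2\theta>0$ for $\theta<0$, a short case split on the sign of $\kappa_S(\theta)$ in the definition of $\Lambda_{\nu,S}$ yields the key inequality $\Lambda_{\nu,S}(\theta)<\Psi_\nu(\theta)$ for every $\theta\neq 0$, with equality at $\theta=0$.

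From here the conclusion at $x=0$ is standard: since $\Lambda_{\nu,S}$ and $\Psi_\nu$ are non-negative and vanish at $\theta=0$, their Legendre-Fenchel transforms at $x=0$ both equal $0$. Taking suprema in the pointwise inequality gives $I_{\mathrm{LD}}\geq J_{\mathrm{LD}}$ everywhere. To upgrade this to a strict inequality for $x\neq 0$, I would pick an argmax $\theta_x$ of the supremum defining $J_{\mathrm{LD}}(x)$ and observe that $\theta_x\neq 0$: this is because $\Psi_\nu$ is differentiable at the origin with $\Psi_\nu^\prime(0)=0$, which is immediate from $\kappa_{S_i}(0)=0$ and $1/\nu>1$. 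Evaluating at this $\theta_x$,
$$I_{\mathrm{LD}}(x)\geq\theta_x x-\Lambda_{\nu,S}(\theta_x)>\theta_x x-\Psi_\nu(\theta_x)=J_{\mathrm{LD}}(x).$$
Finally, $J_{\mathrm{LD}}(x)>0$ for $x\neq 0$ is obtained by plugging a small $\theta$ of the same sign as $x$ into the supremum: since $\Psi_\nu(\theta)$ is of order $|\theta|^{1/\nu}$ near $0$ and $1/\nu>1$, the quantity $\theta x-\Psi_\nu(\theta)$ is strictly positive for small enough $|\theta|$.

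The only technical nuisance I foresee is the piecewise definition of $\Lambda_{\nu,S}$, which is clamped to $0$ on $\{\kappa_S<0\}$: the pointwise comparison has to be verified separately in the two subcases $\kappa_S(\theta)\geq 0$ and $\kappa_S(\theta)<0$. In both subcases the strict positivity of $\Psi_\nu(\theta)$ for $\theta\neq 0$ (which follows from the convexity bounds above) makes the inequality immediate, so I do not expect any real obstacle beyond this bookkeeping.
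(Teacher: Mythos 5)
Your argument is correct and takes essentially the same route as the paper: both proofs reduce to the strict inequality $\Lambda_{\nu,S}(\theta)<\Psi_\nu(\theta)$ at a nonzero maximizer of the supremum defining $J_{\mathrm{LD}}(x)$, which comes from the strict negativity of $\kappa_{S_2}(-\theta)$ for $\theta>0$ (and symmetrically of $\kappa_{S_1}(\theta)$ for $\theta<0$), and then bounds $I_{\mathrm{LD}}(x)$ from below by the value at that point. The only cosmetic differences are that the paper obtains the maximizer from the surjectivity of $\Psi_\nu^\prime$ established in the proof of Proposition \ref{prop:LD-bis} (which also justifies the existence of your argmax), while your small-$\theta$ argument for $J_{\mathrm{LD}}(x)>0$ and the observation that $I_{\mathrm{LD}}(0)=-\inf_{\theta}\Lambda_{\nu,S}(\theta)=0$ are stated a bit more explicitly than in the paper.
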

\begin{proof}
	Firstly, since the range of values of $\Psi_\nu^\prime(\theta)$ (for $\theta$ such that the derivative is well-defined)
	is $(-\infty,\infty)$ (see the final part of the proof of Proposition \ref{prop:LD-bis}, and the change of notation in
	Remark \ref{rem:simplified-notation-psi}), for all $x\in\mathbb{R}$ there exists $\theta_x^{(1)}$ such that 
	$\Psi_\nu^\prime(\theta_x^{(1)})=x$, and therefore
	$$J_{\mathrm{LD}}(x)=\theta_x^{(1)}x-\Psi_\nu(\theta_x^{(1)}).$$
	We recall that $\theta_x^{(1)}=0$ ($\theta_x^{(1)}>0$ and $\theta_x^{(1)}<0$, respectively) if and only if
	$x=0$ ($x>0$ and $x<0$, respectively). Then $J_{\mathrm{LD}}(0)=0$ and, moreover, we have $I_{\mathrm{LD}}(0)=0$; indeed the
	equation $\Lambda_{\nu,S}^\prime(\theta)=0$, i.e.
	$$\frac{1}{\nu}(\kappa_{S_1}(\theta)+\kappa_{S_2}(-\theta))^{1/\nu-1}(\kappa_{S_1}^\prime(\theta)+\kappa_{S_2}^\prime(-\theta))=0,$$
	yields the solution $\theta=0$.
	
	We conclude with the case $x\neq 0$. If $x>0$, then we have
	$$\kappa_{S_1}(\theta_x^{(1)})>\max\{\kappa_{S_1}(\theta_x^{(1)})+\kappa_{S_2}(-\theta_x^{(1)}),0\};$$
	this yields $\Psi_\nu(\theta_x^{(1)})>\Lambda_{\nu,S}(\theta_x^{(1)})$, and therefore
	$$J_{\mathrm{LD}}(x)=\theta_x^{(1)}x-\Psi_\nu(\theta_x^{(1)})<\theta_x^{(1)}x-\Lambda_{\nu,S}(\theta_x^{(1)})
	\leq\sup_{\theta\in\mathbb{R}}\{\theta x-\Lambda_{\nu,S}(\theta)\}=I_{\mathrm{LD}}(x).$$
	Similarly, if $x<0$, then we have
	$$\kappa_{S_2}(-\theta_x^{(1)})>\max\{\kappa_{S_1}(\theta_x^{(1)})+\kappa_{S_2}(-\theta_x^{(1)}),0\};$$
	this yields $\Psi_\nu(\theta_x^{(1)})>\Lambda_{\nu,S}(\theta_x^{(1)})$, and we can conclude following the lines of 
	of the case $x>0$.
\end{proof}

The next Proposition \ref{prop:comparison-rf-MD} provides a similar result which concerns the comparison of $J_{\mathrm{MD}}$
in Proposition \ref{prop:ncMD-bis} and $I_{\mathrm{MD}}(\cdot;m)=I_{\mathrm{MD}}(\cdot;m_1-m_2)$ in Proposition \ref{prop:ncMD}.
In particular we have $I_{\mathrm{MD}}(x;m_1-m_2)>J_{\mathrm{MD}}(x)>0$ for all $x\neq 0$ if and only if $m_1\neq m_2$ (note that, 
if $m_1\neq m_2$, we have $m=m_1-m_2\neq 0$; thus $\alpha(\nu)$ in \eqref{eq:exponents} coincides with $\alpha_1(\nu)$ in 
\eqref{eq:exponent}); on the contrary, if $m_1=m_2$, we have $I_{\mathrm{MD}}(x;0)>J_{\mathrm{MD}}(x)>0$ only if $|x|$ is small 
enough (and strictly positive).

\begin{proposition}\label{prop:comparison-rf-MD}
	We have $J_{\mathrm{MD}}(0)=I_{\mathrm{MD}}(0;m_1-m_2)=0$. Moreover, if $x\neq 0$, we have two cases.
	\begin{enumerate}
		\item If $m_1\neq m_2$, then $I_{\mathrm{MD}}(x;m_1-m_2)>J_{\mathrm{MD}}(x)>0$.
		\item If $m_1=m_2=m_*$ for some $m_*>0$, there exists $\delta_\nu>0$ such that:
		$I_{\mathrm{MD}}(x;0)>J_{\mathrm{MD}}(x)>0$ if $0<|x|<\delta_\nu$,
		$J_{\mathrm{MD}}(x)>I_{\mathrm{MD}}(x;0)>0$ if $|x|>\delta_\nu$,
		and $I_{\mathrm{MD}}(x;0)=J_{\mathrm{MD}}(x)>0$ if $|x|=\delta_\nu$.
	\end{enumerate}
\end{proposition}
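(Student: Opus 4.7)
The plan is to read off both rate functions from the explicit formulas in Propositions \ref{prop:ncMD} and \ref{prop:ncMD-bis} (using $m=m_1-m_2$ and $q=q_1+q_2$) and perform pointwise comparisons; the analysis is elementary once we note that the positive factor $c_\nu:=\nu^{\nu/(1-\nu)}-\nu^{1/(1-\nu)}$ appearing in both $J_{\mathrm{MD}}$ and in $I_{\mathrm{MD}}(\cdot;m)$ for $m\neq 0$ is indeed strictly positive (because $\nu\in(0,1)$ implies $\nu^{\nu/(1-\nu)}>\nu^{1/(1-\nu)}$), and the corresponding positive factor for the case $m=0$ is positive for the analogous reason. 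The vanishing at $x=0$ is immediate from the formulas, which also makes strict positivity away from the origin transparent.

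For Case 1, where $m_1\neq m_2$, I would compare the two rate functions directly. Since the two expressions share the same exponent $1/(1-\nu)$ and the same prefactor $c_\nu$, the comparison reduces to comparing the inner fractions. Suppose for instance $m:=m_1-m_2>0$; for $x>0$, $m_2>0$ gives $0<m_1-m_2<m_1$, so $x/(m_1-m_2)>x/m_1>0$ and raising to the positive power $1/(1-\nu)$ preserves the strict inequality, yielding $I_{\mathrm{MD}}(x;m)>J_{\mathrm{MD}}(x)>0$. For $x<0$ we have $x/m<0$ and hence $I_{\mathrm{MD}}(x;m)=\infty>J_{\mathrm{MD}}(x)>0$. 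The case $m<0$ is symmetric (swap the roles of $m_1,m_2$ and of positive and negative $x$).

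For Case 2, with $m_1=m_2=m_*>0$, both $I_{\mathrm{MD}}(\cdot;0)$ and $J_{\mathrm{MD}}$ are even functions of $x$, so it suffices to work on $x>0$. The key observation is a comparison of the two exponents: $2/(2-\nu)<1/(1-\nu)$, since this is equivalent to $2(1-\nu)<2-\nu$, i.e.\ $-\nu<0$. Consequently, writing
\[
\frac{I_{\mathrm{MD}}(x;0)}{J_{\mathrm{MD}}(x)}=K_\nu\, x^{\,2/(2-\nu)-1/(1-\nu)},
\]
where $K_\nu>0$ is a constant depending only on $\nu$ and on $q_1+q_2,m_*$, the exponent is strictly negative; thus the ratio is strictly decreasing from $+\infty$ (as $x\downarrow 0$) to $0$ (as $x\to\infty$), hence it crosses the value $1$ at a unique $\delta_\nu>0$. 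The three claimed inequalities (strict dominance of $I_{\mathrm{MD}}$ for $0<|x|<\delta_\nu$, equality at $|x|=\delta_\nu$, and strict dominance of $J_{\mathrm{MD}}$ for $|x|>\delta_\nu$) follow, together with strict positivity for $x\neq 0$ since both prefactors are positive.

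I do not expect any real obstacle: the proof is essentially an organized comparison of two power functions. The only point that requires a little care is checking the inequality $2/(2-\nu)<1/(1-\nu)$ that drives Case 2, and recording that the factors $c_\nu$ and the $m=0$ prefactor are strictly positive for $\nu\in(0,1)$; beyond that, the result is a direct consequence of the closed-form expressions already obtained.
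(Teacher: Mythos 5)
Your proposal is correct and follows essentially the same route as the paper: for $m_1\neq m_2$ you compare the inner fractions $x/(m_1-m_2)$ versus $x/m_1$ (or use that $I_{\mathrm{MD}}(x;m_1-m_2)=\infty$ on the wrong half-line), and for $m_1=m_2=m_*$ you compare the two power functions $|x|^{1/(1-\nu)}$ and $|x|^{2/(2-\nu)}$, exactly as the paper does (it writes the latter exponent as $1/(1-\nu/2)$). Your ratio argument with the exponent inequality $2/(2-\nu)<1/(1-\nu)$ just makes explicit the crossing point $\delta_\nu$ that the paper declares "trivially" exists, so there is nothing to correct.
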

\begin{proof}
	The equality $J_{\mathrm{MD}}(0)=I_{\mathrm{MD}}(0;m_1-m_2)=0$ (case $x=0$) is immediate.
	So, in what follows, we take $x\neq 0$. We start with the case $m_1\neq m_2$, and we have two cases.
	\begin{itemize}
		\item Assume that $m_1>m_2$. Then for $x<0$ we have $J_{\mathrm{MD}}(x)<\infty=I_{\mathrm{MD}}(x;m_1-m_2)$.
		For $x>0$ we have $\frac{x}{m_1}<\frac{x}{m_1-m_2}$, which is trivially equivalent to 
		$J_{\mathrm{MD}}(x)<I_{\mathrm{MD}}(x;m_1-m_2)$.
		\item Assume that $m_1<m_2$. Then for $x>0$ we have $J_{\mathrm{MD}}(x)<\infty=I_{\mathrm{MD}}(x;m_1-m_2)$.
		For $x<0$ we have $-\frac{x}{m_2}<-\frac{x}{m_2-m_1}$, which is trivially equivalent to
		$J_{\mathrm{MD}}(x)<I_{\mathrm{MD}}(x;m_1-m_2)$.
	\end{itemize}
	Finally, if $m_1=m_2=m_*$ for some $m_*>0$, the statement to prove trivially holds noting that, for two constants 
	$c_{\nu,m_*}^{(1)},c_{\nu,m_*}^{(2)}>0$, we have $J_{\mathrm{MD}}(x)=c_{\nu,m_*}^{(1)}|x|^{1/(1-\nu)}$ and 
	$I_{\mathrm{MD}}(x;0)=c_{\nu,m_*}^{(2)}|x|^{1/(1-\nu/2)}$.
\end{proof}

We remark that the inequalities around $x=0$ in Propositions \ref{prop:comparison-rf-LD} and \ref{prop:comparison-rf-MD} are not surprising;
indeed we expect to have a slower convergence to zero when we deal with two independent random time-changes (because in that case we have more
randomness).

Now we consider comparisons between rate functions for different values of $\nu\in(0,1)$.
We mean the rate functions in Propositions \ref{prop:LD} and \ref{prop:LD-bis}, and we restrict our attention to a comparison around 
$x=0$. In view of what follows, we consider some slightly different notation: $I_{\mathrm{LD},\nu}$ in place of $I_{\mathrm{LD}}$ in
Proposition \ref{prop:LD}; $J_{\mathrm{LD},\nu}$ in place of $J_{\mathrm{LD}}$ in Proposition \ref{prop:LD-bis}, with $\nu_1=\nu_2=\nu$
for some $\nu\in(0,1)$.

\begin{proposition}\label{prop:comparison-rf-LD-nu}
	Let $\nu,\eta\in(0,1)$ be such that $\eta<\nu$. Then:
	$I_{\mathrm{LD},\eta}(0)=I_{\mathrm{LD},\nu}(0)=0$, $J_{\mathrm{LD},\eta}(0)=J_{\mathrm{LD},\nu}(0)=0$; for some
	$\delta>0$, we have $I_{\mathrm{LD},\eta}(x)>I_{\mathrm{LD},\nu}(x)>0$ and 
	$J_{\mathrm{LD},\eta}(x)>J_{\mathrm{LD},\nu}(x)>0$ for $0<|x|<\delta$.
\end{proposition}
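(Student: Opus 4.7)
The plan is to first dispose of the vanishing and positivity near $0$, and then to compare the two rate functions through a local expansion of the Legendre--Fenchel transforms defining them.

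For the vanishing, choosing $\theta=0$ in \eqref{eq:LD-rf} and \eqref{eq:LD-type1-rf-Legendre-transform} gives $I_{\mathrm{LD},\nu}(0)=J_{\mathrm{LD},\nu}(0)=0$, because $\Lambda_{\nu,S}(0)=\Psi_\nu(0)=0$ and both functions are nonnegative. Strict positivity at $x\neq 0$ small follows from the observation $\Lambda_{\nu,S}(\theta)=o(|\theta|)$ as $\theta\to 0$: on the side where $\kappa_S(\theta)\geq 0$ this is $(\kappa_S(\theta))^{1/\nu}$ with $1/\nu>1$, and elsewhere the function is identically zero. Hence for any $x\neq 0$ a small $\theta$ of suitable sign makes $\theta x-\Lambda_{\nu,S}(\theta)>0$; the same argument applies to $\Psi_\nu$.

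For the strict inequalities, I would Taylor-expand $\kappa_S(\theta)=m\theta+q\theta^2/2+o(\theta^2)$ near $\theta=0$ (and analogously $\kappa_{S_1},\kappa_{S_2}$) and locate the G\"artner--Ellis optimizer $\theta^*_\nu$ defined by $\Lambda_{\nu,S}'(\theta^*_\nu)=x$. Three regimes for $I_{\mathrm{LD},\nu}(x)$ arise as $|x|\to 0$. In the centred case $m=0$, $q>0$ one finds $I_{\mathrm{LD},\nu}(x)\sim c_\nu^{(0)}|x|^{2/(2-\nu)}$. In the non-centred case $m\neq 0$ with $mx>0$ the optimizer stays near $0$ and $I_{\mathrm{LD},\nu}(x)\sim c_\nu^{(+)}|x|^{1/(1-\nu)}$. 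In the opposite non-centred case $m\neq 0$ with $mx<0$ the optimizer is attracted to the non-trivial zero $\theta_0$ of $\kappa_S$ (which is $\nu$-independent); using $\kappa_S(\theta)\approx \kappa_S'(\theta_0)(\theta-\theta_0)$ near $\theta_0$ one obtains $I_{\mathrm{LD},\nu}(x)=\theta_0 x+c_\nu^{(-)}|x|^{1/(1-\nu)}+o(|x|^{1/(1-\nu)})$, whose linear leading term does not depend on $\nu$. For $J_{\mathrm{LD},\nu}$ the situation is simpler: since $m_1,m_2>0$, the optimizer stays near $0$ for both signs of $x$, yielding $J_{\mathrm{LD},\nu}(x)\sim d_\nu^{(\pm)}|x|^{1/(1-\nu)}$ on each side of the origin.

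The unifying feature is that in every regime the $\nu$-dependent part takes the form $(\text{positive constant})\cdot|x|^{\gamma(\nu)}$ with $\gamma(\nu)$ strictly increasing in $\nu$, namely $\gamma(\nu)=2/(2-\nu)$ in the centred case and $\gamma(\nu)=1/(1-\nu)$ otherwise. Since $\eta<\nu$ gives $\gamma(\eta)<\gamma(\nu)$, the ratio $|x|^{\gamma(\eta)}/|x|^{\gamma(\nu)}$ tends to $+\infty$ as $|x|\to 0^+$, so for $\delta$ small enough the $\eta$-term strictly dominates the $\nu$-term on $0<|x|<\delta$; this delivers the required strict inequalities for both $I_{\mathrm{LD}}$ and $J_{\mathrm{LD}}$. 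The hard part is the non-centred case with $mx<0$ for $I_{\mathrm{LD},\nu}$, where the linear term $\theta_0 x$ is common to $\eta$ and $\nu$ and the whole comparison is driven by the next-order contribution; there one has to verify that the optimizer indeed sits in the region where the local expansion around $\theta_0$ is valid and to check the sign of $c_\nu^{(-)}$, which an explicit computation identifies as $(\nu^{\nu/(1-\nu)}-\nu^{1/(1-\nu)})|\kappa_S'(\theta_0)|^{-1/(1-\nu)}>0$. The remaining cases are routine single-variable Legendre calculations.
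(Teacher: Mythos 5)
Your strategy is sound, but it is a genuinely different and considerably heavier route than the paper's. The paper never expands the rate functions: for $|x|<\delta$ it takes a maximizer $\theta_{x,\nu}$ realizing $I_{\mathrm{LD},\nu}(x)=\theta_{x,\nu}x-\Lambda_{\nu,S}(\theta_{x,\nu})$, notes that $\theta_{x,\nu}\neq 0$ for $x\neq 0$ and that $\Lambda_{\nu,S}(\theta_{x,\nu})\in[0,1)$ once $|x|$ is small (the maximizer approaches the zero set of $\kappa_S$ as $x\to 0$), and then uses the elementary inequality $c^{1/\eta}<c^{1/\nu}$ for $c\in(0,1)$ and $\eta<\nu$ to get $\Lambda_{\eta,S}(\theta_{x,\nu})<\Lambda_{\nu,S}(\theta_{x,\nu})$, hence $I_{\mathrm{LD},\nu}(x)<\theta_{x,\nu}x-\Lambda_{\eta,S}(\theta_{x,\nu})\leq I_{\mathrm{LD},\eta}(x)$; the identical one-line comparison applied to $\Psi_\nu$ versus $\Psi_\eta$ gives the $J$-inequality. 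Your route instead extracts the small-$x$ asymptotics of the two Legendre transforms and compares the exponents $\gamma(\nu)$. That buys more information (explicit local behaviour of the rate functions, and your constant $(\nu^{\nu/(1-\nu)}-\nu^{1/(1-\nu)})|\kappa_S^\prime(\theta_0)|^{-1/(1-\nu)}$ in the regime $mx<0$ is indeed correct), but it costs three separate expansions, and you leave the decisive case $mx<0$ as a sketch: to conclude there you need a genuine \emph{upper} bound for $I_{\mathrm{LD},\nu}(x)$ of the form $\theta_0x+O(|x|^{1/(1-\nu)})$ (the supremum runs over all $\theta$, not only $\theta$ near $\theta_0$, so you must rule out contributions away from $\theta_0$) together with a \emph{lower} bound $I_{\mathrm{LD},\eta}(x)\geq\theta_0x+c_\eta|x|^{1/(1-\eta)}(1+o(1))$ obtained by plugging in a near-optimal $\theta$; only then does the exponent comparison $1/(1-\eta)<1/(1-\nu)$ close the argument.

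One point you should make explicit: your expansion in the regime $mx<0$ presupposes that $\kappa_S$ has a second zero $\theta_0$ in the interior of its effective domain, with the optimizer finite and attracted to it. This is precisely where the essential smoothness of $\Lambda_{\nu,S}$, assumed in Proposition \ref{prop:LD} and implicitly standing throughout Section \ref{sec:comparisons}, is needed; it is not automatic (see Remark \ref{rem:es-not-guaranteed}). If it fails, e.g. if $\kappa_S<0$ on the whole negative part of a domain with finite left endpoint $-r$, then for small $x<0$ one gets $I_{\mathrm{LD},\eta}(x)=I_{\mathrm{LD},\nu}(x)=r|x|$ and the strict inequality is simply false, so this is not a cosmetic omission. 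The paper uses the same hypothesis, but only through the single statement that a maximizer with $\Lambda_{\nu,S}$-value in $[0,1)$ exists, which is the minimal input required; your treatment of $J_{\mathrm{LD}}$ and of the vanishing and positivity at the origin is fine and needs no such care, since $m_1,m_2>0$ keeps the relevant optimizers near the origin.
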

\begin{proof}
	We can say that there exists $\delta>0$ small enough such that, for $|x|<\delta$, there exist $\theta_{x,\nu}^{(1)},\theta_{x,\nu}^{(2)}\in\mathbb{R}$ such that:
	$$I_{\mathrm{LD},\nu}(x)=\theta_{x,\nu}^{(2)}x-\Lambda_{\nu,S}(\theta_{x,\nu}^{(2)})
	\ \mbox{and}\ J_{\mathrm{LD},\nu}(x)=\theta_{x,\nu}^{(1)}x-\Psi_\nu(\theta_{x,\nu}^{(1)});$$
	moreover $\theta_{x,\nu}^{(1)}=\theta_{x,\nu}^{(2)}=0$ if $x=0$, and $\theta_{x,\nu}^{(1)},\theta_{x,\nu}^{(2)}\neq 0$ if 
	$x\neq 0$; finally we have
	$$0\leq\Psi_\nu(\theta_{x,\nu}^{(1)}),\Lambda_{\nu,S}(\theta_{x,\nu}^{(2)})<1.$$
	Then, by taking into account the same formulas with $\eta$ in place of $\nu$ (together with the inequality $\frac{1}{\eta}>\frac{1}{\nu}$),
	it is easy to check that
	$$0\leq\Lambda_{\eta,S}(\theta_{x,\nu}^{(2)})<\Lambda_{\nu,S}(\theta_{x,\nu}^{(2)})<1
	\quad\mbox{and}\quad 0\leq\Psi_\eta(\theta_{x,\nu}^{(1)})<\Psi_\nu(\theta_{x,\nu}^{(1)})<1$$	
	(see \eqref{eq:LD-GE-limit} for the first chain of inequalities, and \eqref{eq:LD-GE-limit-type1} with $\nu_1=\nu_2=\nu$
	for the second chain of inequalities); thus
	$$I_{\mathrm{LD},\nu}(x)=\theta_{x,\nu}^{(2)}x-\Lambda_{\nu,S}(\theta_{x,\nu}^{(2)})
	<\theta_{x,\nu}^{(2)}x-\Lambda_{\eta,S}(\theta_{x,\nu}^{(2)})
	\leq\sup_{\theta\in\mathbb{R}}\{\theta x-\Lambda_{\eta,S}(\theta)\}=I_{\mathrm{LD},\eta}(x)$$
	and
	$$J_{\mathrm{LD},\nu}(x)=\theta_{x,\nu}^{(1)}x-\Psi_\nu(\theta_{x,\nu}^{(1)})
	<\theta_{x,\nu}^{(1)}x-\Psi_\eta(\theta_{x,\nu}^{(1)})
	\leq\sup_{\theta\in\mathbb{R}}\{\theta x-\Psi_\eta(\theta)\}=J_{\mathrm{LD},\eta}(x).$$
	This completes the proof.
\end{proof}

As a consequence of Proposition \ref{prop:comparison-rf-LD-nu} we can say that, in all the above convergences 
to zero governed by some rate function (that uniquely vanishes at zero), the smaller the $\nu$, the faster the convergence of
the random variables to zero.

We also remark that we can obtain a version of Proposition \ref{prop:comparison-rf-LD-nu} in terms of the rate functions
$I_{\mathrm{MD}}(\cdot;m_1-m_2)=I_{\mathrm{MD},\nu}(\cdot;m_1-m_2)$ in Proposition \ref{prop:ncMD} and 
$J_{\mathrm{MD}}=J_{\mathrm{MD},\nu}$ in Proposition \ref{prop:ncMD-bis}. Indeed we can obtain the same kind of inequalities, 
and this is easy to check because we have explicit expressions of the rate functions (here we omit the details).

\section{An example with independent tempered stable subordinators}\label{sec:diff-TSS}
Throughout this section we consider the examples presented below.

\begin{example}\label{ex:diff-TSS}
	Let $\{S_1(t):t\geq 0\}$ and $\{S_2(t):t\geq 0\}$ be two independent tempered stable subordinators
	with parameters $\beta\in(0,1)$ and $r_1>0$, and $\beta\in(0,1)$ and $r_2>0$, respectively. Then
	$$\kappa_{S_i}(\theta):=\left\{\begin{array}{ll}
		r_i^\beta-(r_i-\theta)^\beta&\ \mbox{if}\ \theta\leq r_i\\
		\infty&\ \mbox{if}\ \theta> r_i
	\end{array}\right.$$
	for $i=1,2$; thus
	$$\kappa_S(\theta):=\kappa_{S_1}(\theta)+\kappa_{S_2}(-\theta)=\left\{\begin{array}{ll}
		r_1^\beta-(r_1-\theta)^\beta+r_2^\beta-(r_2+\theta)^\beta&\ \mbox{if}\ -r_2\leq\theta\leq r_1\\
		\infty&\ \mbox{otherwise}.
	\end{array}\right.$$
\end{example}

Note that, in order to be consistent in the comparisons between $I_{\mathrm{LD}}$ and $J_{\mathrm{LD}}$, we always take
the rate function $J_{\mathrm{LD}}$ in Proposition \ref{prop:LD-bis} with $\nu_1=\nu_2=\nu$. Moreover, we remark that, 
for Example \ref{ex:diff-TSS}, the function $\Lambda_{\nu,S}$ is essentially smooth because
$$\lim_{\theta\downarrow -r_2}\Lambda_{\nu,S}^\prime(\theta)=-\infty\quad\mbox{and}\quad
\lim_{\theta\uparrow r_1}\Lambda_{\nu,S}^\prime(\theta)=+\infty;$$
indeed these conditions holds if and only if $\kappa_S(r_1)$ and $\kappa_S(-r_2)$ are positive, and in fact we have
$$\kappa_S(r_1)=\kappa_S(-r_2)=r_1^\beta+r_2^\beta-(r_1+r_2)^\beta
=(r_1+r_2)^\beta\left(\left(\frac{r_1}{r_1+r_2}\right)^\beta+\left(\frac{r_2}{r_1+r_2}\right)^\beta-1\right)>0.$$

We start with Figure \ref{fig:different-values-of-nu}. The graphs agree with the inequalities in Proposition 
\ref{prop:comparison-rf-LD-nu}. Moreover Figure \ref{fig:different-values-of-nu} is more informative than Figure 3 in
\cite{LeeMacci}; indeed the graphs of $J_{\mathrm{LD}}=J_{\mathrm{LD},\nu}$ on the right (for different values of $\nu$)
show that the inequalities in Proposition \ref{prop:comparison-rf-LD-nu} hold only in a neighborhood of the origin $x=0$.

\begin{figure}[ht]
	\centering
	\includegraphics[scale=0.5]{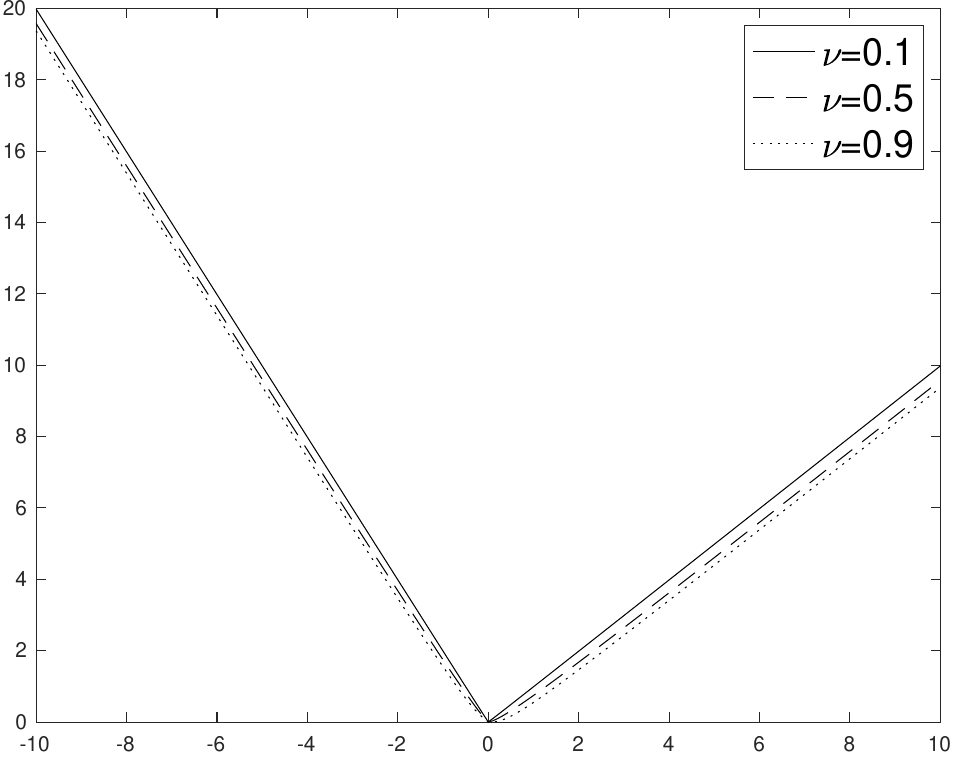}
	\includegraphics[scale=0.5]{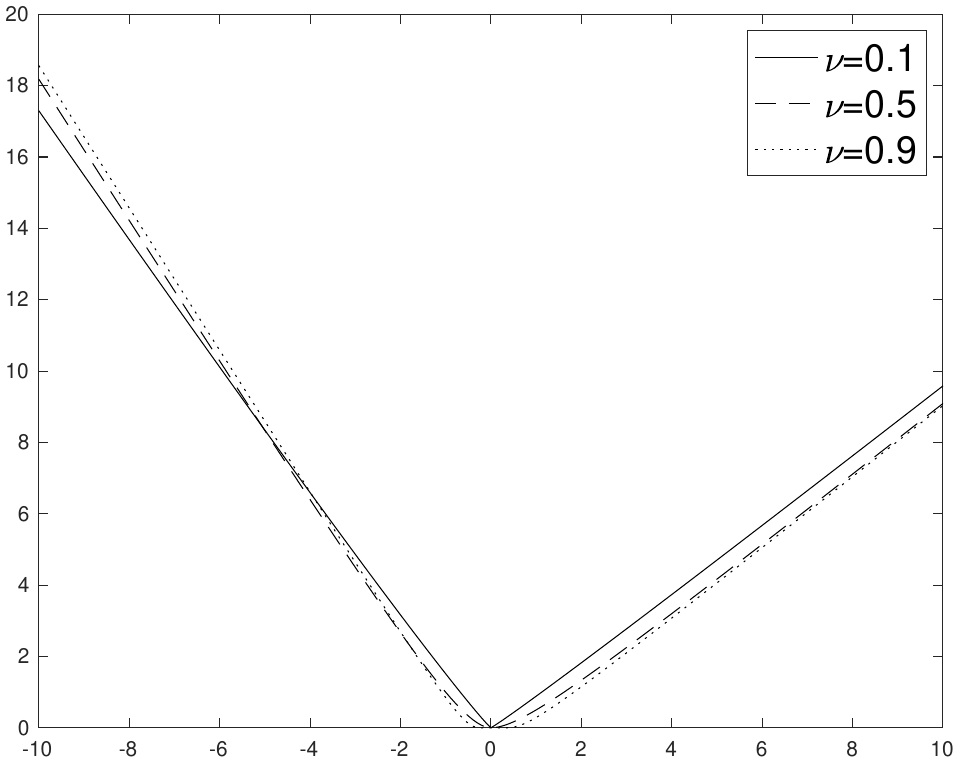}
	\caption{Rate functions $I_{\mathrm{LD}}=I_{\mathrm{LD},\nu}$ (on the left) and $J_{\mathrm{LD}}=J_{\mathrm{LD},\nu}$ 
	(on the right) for the processes in Example \ref{ex:diff-TSS} and different values of $\nu$ ($\nu=0.1,0.5,0.9$). 
	Numerical values of the other parameters: $r_1=1$, $r_2=2$, $\beta=0.5$.}
	\label{fig:different-values-of-nu}
\end{figure}

In the next Figures \ref{fig:different-values-of-beta} and \ref{fig:different-values-of-r2} we take different values of
$\beta$ and of $r_2$, respectively, when the other paramaters are fixed. The graphs in these figures agree with 
Proposition \ref{prop:comparison-rf-LD}.

\begin{figure}[ht]
	\centering
	\includegraphics[scale=0.33]{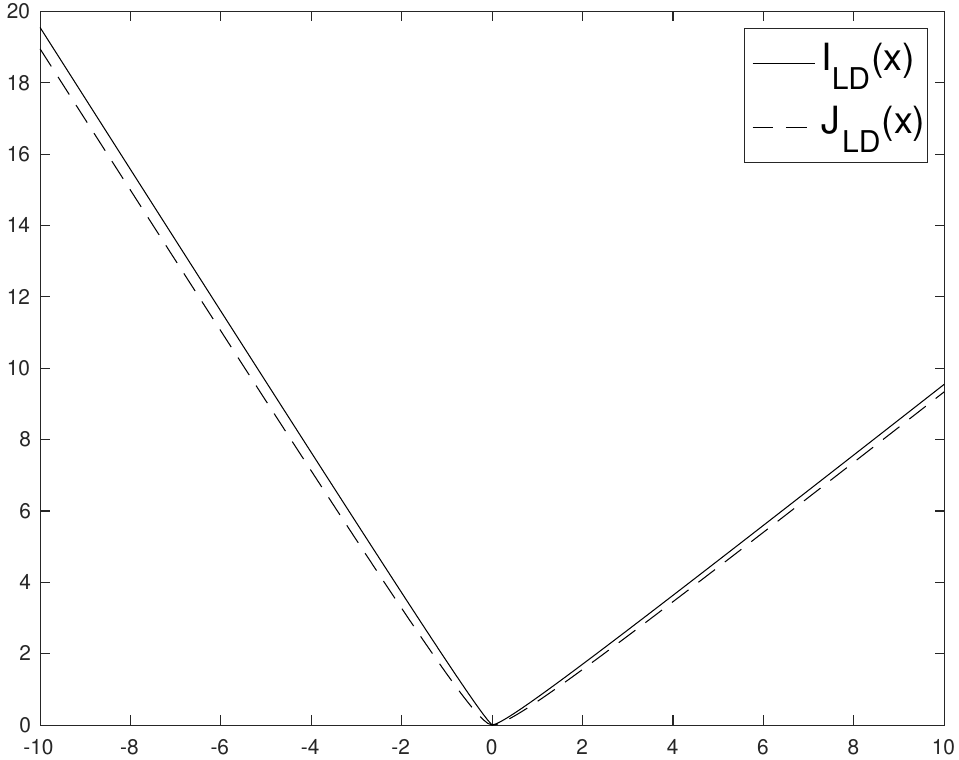}
	\includegraphics[scale=0.33]{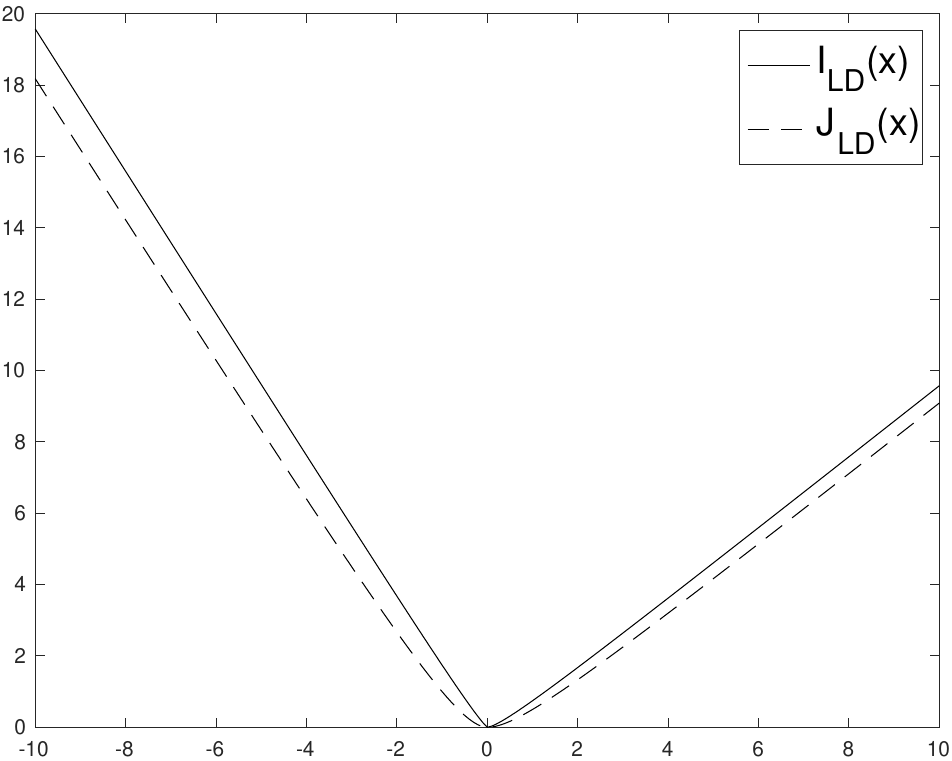}
	\includegraphics[scale=0.33]{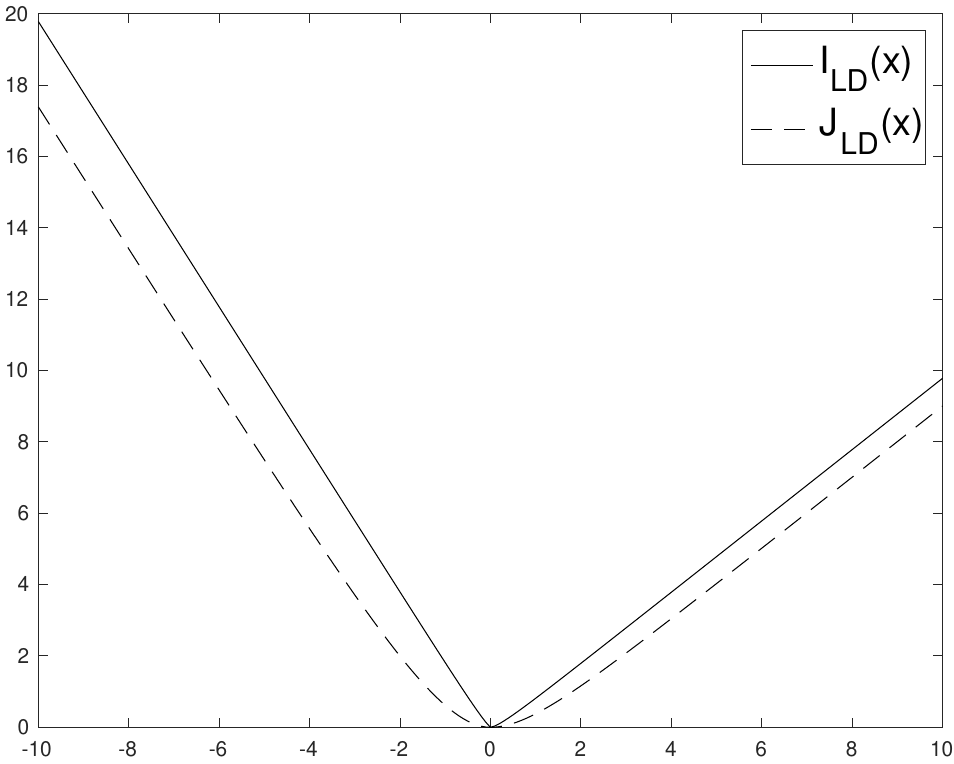}
	\caption{Rate functions $I_{\mathrm{LD}}$ and $J_{\mathrm{LD}}$ for the processes in Example \ref{ex:diff-TSS} and 
	different values of $\beta$ ($\beta=0.3,0.5,0.7$ from left to right). Numerical values of the other parameters: 
	$r_1=1$, $r_2=2$, $\nu=0.5$.}
	\label{fig:different-values-of-beta}
\end{figure}

\begin{figure}[ht]
	\centering
	\includegraphics[scale=0.33]{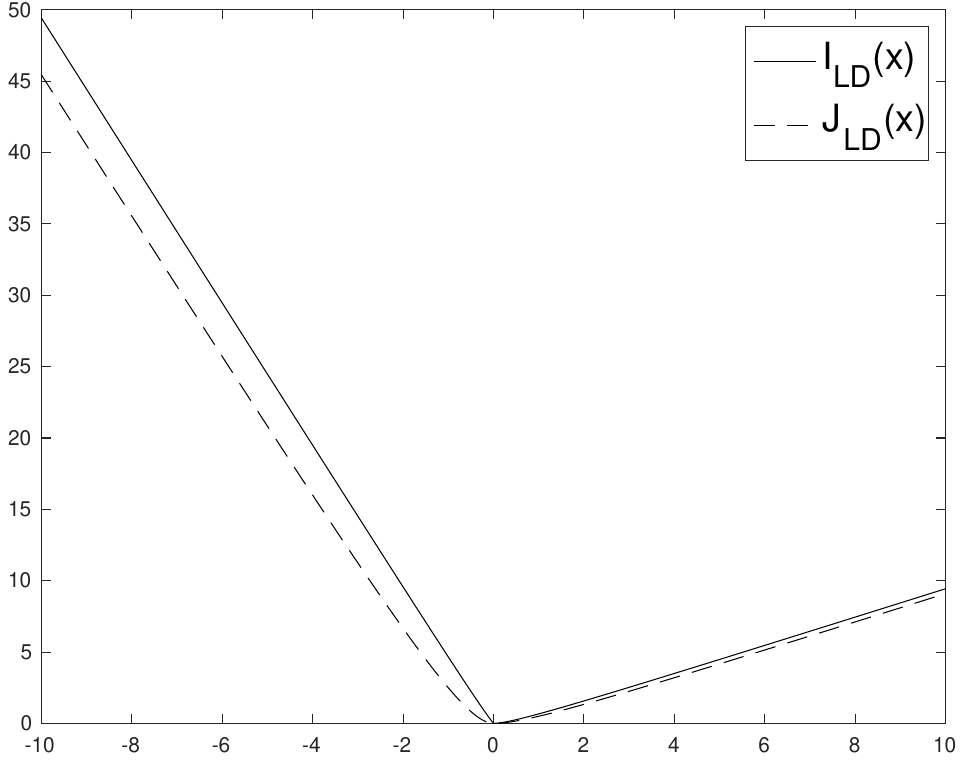}
	\includegraphics[scale=0.33]{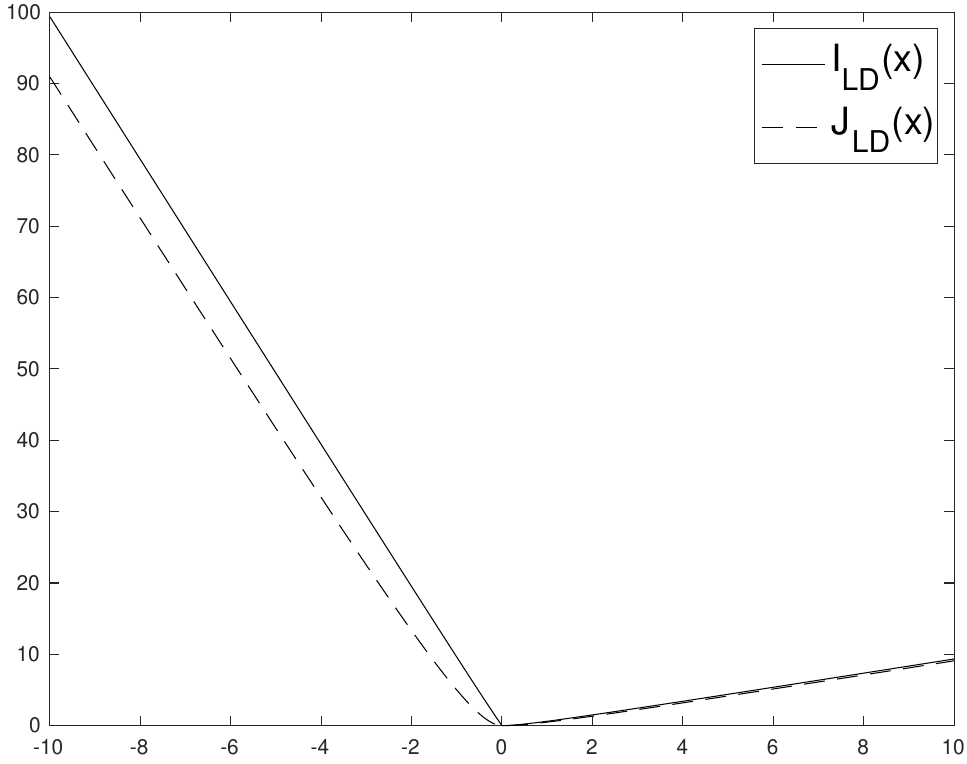}
	\includegraphics[scale=0.33]{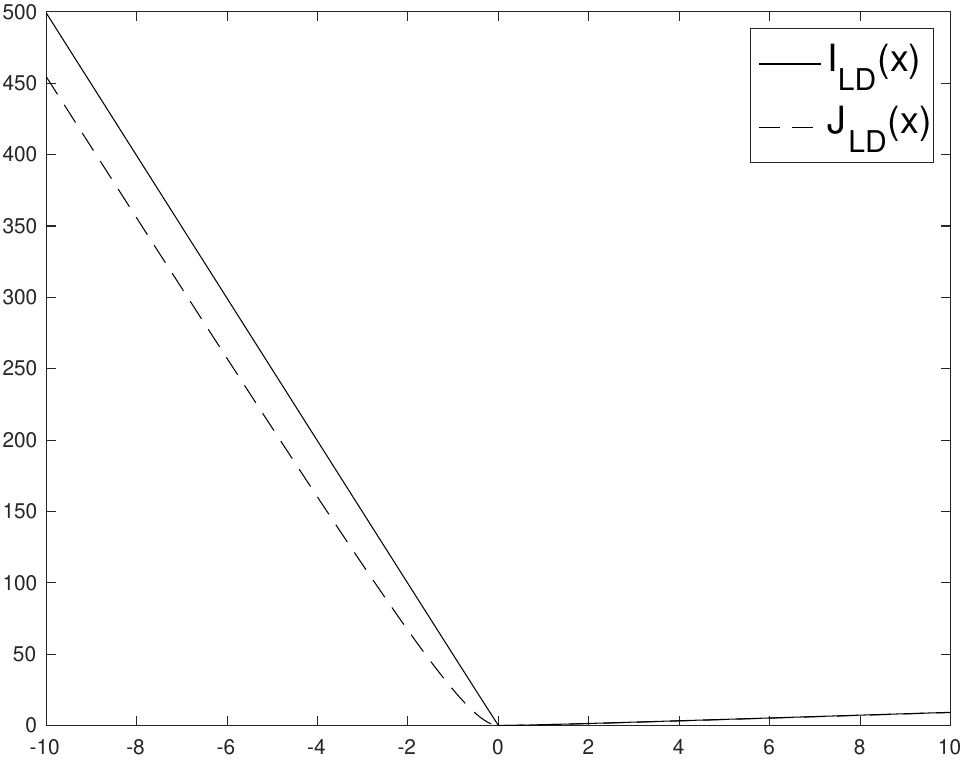}
	\caption{Rate functions $I_{\mathrm{LD}}$ and $J_{\mathrm{LD}}$ for the processes in Example \ref{ex:diff-TSS} and 
	different values of $r_2$ ($r_2=5,10,50$ from left to right). Numerical values of the other 
	parameters: $r_1=1$, $\nu=0.5$, $\beta=0.5$.}
    \label{fig:different-values-of-r2}
\end{figure}

\paragraph{Acknowledgements.}
We thank Prof. Zhiyi Chi for some comments on the proof of Lemma \ref{lem:Chi}. We also thank a referee for some useful
comments which led to the addition of Section \ref{sec:referee} and an improvement in the presentation of the paper.

\paragraph{Funding.} 
A.I. acknowledges the support of MUR-PRIN 2022 PNRR (project P2022XSF5H ‘‘Sto\-chastic Models in Biomathematics 
and Applications’’) and by INdAM-GNCS.\\
C.M. acknowledges the support of MUR Excellence Department Project awarded to the Department of Mathematics, 
University of Rome Tor Vergata (CUP E83C23000330006), by University of Rome Tor Vergata (project "Asymptotic Properties 
in Probability" (CUP E83C22001780005)) and by INdAM-GNAMPA.\\
A.M. acknowledges the support of MUR-PRIN 2022 (project 2022XZSAFN ‘‘Anomalous Phenomena on Regular and Irregular Domains:
Approximating Complexity for the Applied Sciences’’), by MUR-PRIN 2022 PNRR (project P2022XSF5H ‘‘Stochastic Models in 
Biomathematics and Applications’’) and by INdAM-GNCS.

\end{document}